\newtheorem{theorem}{Theorem}[section]
\newtheorem{lemma}[theorem]{Lemma}
\theoremstyle{definition}
\newtheorem{algorithm}[theorem]{Algorithm}
\newtheorem{proposition}[theorem]{Proposition}
\newtheorem*{theorem*}{Theorem}
\newtheorem*{proposition*}{Proposition}
\newcommand{\rb}[1]{{\left( #1 \right)}}
\newcommand{\rbb}[1]{{\Bigl( #1 \Bigr)}}
\newcommand{\modb}[1]{{\left| #1 \right|}}
\newcommand{\modbb}[1]{{\Bigl| #1 \Bigr|}}
\newcommand{\gp}[1]{{\left\langle #1 \right\rangle}}
\def\s{{\sigma}}
\def\CF{{\mathcal F}}
\def\ovr{{\overline{r}}}
\def\MS{{\mathbb{S}}}
\def\ME{{\mathbb{E}}}
\def\MN{{\mathbb{N}}}
\def\MZ{{\mathbb{Z}}}
\def\MR{{\mathbb{R}}}
\def\BP{{\bf{P}}}
\title[Mean-Set Attack]{Mean-Set Attack: Cryptanalysis of Sibert et al. Authentication Protocol}
\date{\today}
\begin{document}

\author[N. Mosina]{Natalia Mosina}
\address{Department of Mathematics, CUNY/LAGCC, Long Island City, NY, USA}
\email{nmosina@lagcc.cuny.edu}
\thanks{The work of the first author was partially supported by the PSC-CUNY Grant Award 60014-40 41. 
The work of the second author was partially supported by the NSF grant DMS-0914773.}

\author[A. Ushakov]{Alexander Ushakov}
\address{Department of Mathematics, Stevens Institute of Technology, Hoboken, NJ, USA}
\email{sasha.ushakov@gmail.com}

\begin{abstract}
We analyze the Sibert et al. group-based (Feige-Fiat-Shamir type) authentication protocol
and show that the protocol is not computationally zero-knowledge.
In addition, we provide experimental evidence that our approach is practical
and can succeed even for groups with no efficiently computable
length function such as braid groups.
The novelty of this work is that we are not attacking the protocol
by trying to solve an underlying complex algebraic problem, namely,
the conjugacy search problem, but use a probabilistic approach, instead.

\emph{Key words and phrases:} 
group-based cryptography, zero knowledge,
authentication protocol, probability on graphs and groups, braid group,
mean-set, mean-set attack principle, shift search problem.
\end{abstract}

\maketitle


\section{Introduction}

The group-based cryptography attracted a lot of attention after invention of the
Anshel-Anshel-Goldfeld \cite{AAG} and Ko-Lee et al. \cite{KLCHKP} key-exchange protocols in $1999$.
Since then a number of new cryptographic protocols, including public-key {\em authentication protocols},
based on infinite groups were invented and analyzed. One may consult
\cite{MSU_book} and \cite{Dehornoy_survey} to learn more about general group-based cryptography.
In this paper we consider a particular interactive group-based authentication scheme,
Sibert et al. protocol (see \cite{SDG}, \cite{Dehornoy_survey}).

Recall that
any {\em interactive proof of knowledge system} is
a multi-round randomized protocol for two parties, in which one of the parties
(the Prover) wishes to
convince another party (the Verifier) of the validity of a given assertion.
Every interactive proof of knowledge should satisfy {\em completeness} and {\em soundness}
properties (\cite{Fiat}, \cite{Goldreich_survey:2002}):
\begin{enumerate}
\item[]
{\em Completeness}: If the assertion is true, it should be accepted by the Verifier
with high probability.
\item[]
{\em Soundness}: If the assertion is false, then the Verifier rejects it
with high probability.
\end{enumerate}
If the Prover does not trust the Verifier and does not want to
compromise any private information in the process of providing the proof of identity,
then the following property, concerned with the preservation of security,
becomes very important:
\begin{itemize}
    \item[]
{\em Zero-Knowledge (ZK)}:
Except the validity of the Prover's assertions, no other information is revealed
in the process of the proof.
\end{itemize}
If a given protocol possesses the zero-knowledge property, then it is
considered to be a {\em zero-knowledge interactive proof system}
(\cite{Goldreich_survey:2002}).

There are three different notions of zero-knowledge
that have been commonly used in the
literature (\cite {G}, \cite{Goldreich_survey:2002}, \cite{Dehornoy_survey}); namely,
perfect zero-knowledge, statistical zero-knowledge, and computational zero-knowledge.
The first notion is the most strict definition of ZK, which is rarely useful
in practice.
The last notion of the ZK property (computational zero-knowledge)
is the most liberal notion, and it is
used more frequently in practice than the others.

Sibert et al. authentication protocol,
is an example of an interactive (dynamic, randomized) proof system.
In this paper, we use probabilistic tools,
introduced
in \cite{MosUsh:SLLN1} and outlined in Section \ref{se:SLLN} below, to
design an attack on this particular
cryptographic primitive and show that it is not
computationally zero-knowledge.
In addition, we conduct some experiments that support our conclusions
and show that the protocol is not secure in practice.

\subsection{Description of the protocol}
\label{se:protocol}

The Sibert's protocol is an iterated two-party three-pass
Feige-Fiat-Shamir \cite{Fiat} type authentication protocol.
There are two slightly different descriptions of the protocol
available in \cite{Dehornoy_survey} and \cite{SDG} with two different key generation
algorithms. In \cite{SDG}, the protocol is introduced as \textit{Scheme II}.
Here, we follow
the description of the scheme from the survey \cite{Dehornoy_survey},
except for the minor notational modifications in the conjugation.
These modifications do not affect the protocol and its cryptographic properties
at all (inverting $r$ and $y$ in \cite{Dehornoy_survey} would resolve it). In addition,
\cite{Dehornoy_survey} and \cite{SDG} treat the protocol slightly differently themselves,
with and without a collision-free one-way hash function, respectively.
Nevertheless, it is not essential for our analysis.

Let $G$ be a (non-commutative, infinite)
group, called the {\em platform group} and $\mu$ a probability measure on $G$.
The Prover's {\em private key} is an element $s \in G$,
the Prover's {\em public key} is a pair $(w, t)$, where $w$ is an arbitrary
element of the group $G$, called {\em the base element}, and $t = s^{-1} w s$
is a conjugate of $w$ by $s$.
In addition, we assume that $H$ is a collision-free one-way hash function from $G$ to $\{0,1\}^N$.
A single round of the protocol is performed as follows:
\begin{enumerate}
    \item
The Prover chooses a random element $r \in G$, called the {\em nonce},
according to the probability measure $\mu$,
and sends $x = H(r^{-1} t r)$, called the {\em commitment}, to the Verifier.
    \item
The Verifier chooses a random bit $c$, called the {\em challenge},
and sends it to the Prover.
\begin{itemize}
    \item
If $c=0$, then the Prover sends $y = r$ to the Verifier and the
Verifier checks if the equality $x = H(y^{-1} t y)$ is satisfied.
    \item
If $c=1$, then the Prover sends $y = sr$ to the Verifier and the
Verifier checks if the equality $x = H(y^{-1} w y)$ is satisfied.
\end{itemize}
\end{enumerate}
This round is repeated $k$ times
to guarantee the {\em soundness error} (i.e., probability that a cheating
Prover will be able to convince the Verifier of a false statement)
of order $2^{-k}$, which is considered to be negligible if $k$ is large,
say $k\geq 100$.
The Sibert's protocol satisfies both, completeness and soundness, properties
of interactive proof systems.

In addition, \cite{SDG} describes another
authentication protocol, the so-called \textit{Scheme III}, which is different from the
one described above.
Even though techniques of this paper do not directly
apply to that protocol, we believe that using similar ideas,
this scheme can be successfully attacked as well.

\subsection{Security of the protocol}
\label{se:zk}

Note that if an intruder (named Eve) can compute the secret element $s$ or any element $s' \in G$ such that
$t = s'^{-1} w s'$, i.e., if Eve can solve {\em the conjugacy search problem} for $G$,
then she can authenticate as the Prover. Thus, as indicated in \cite{SDG},
the computational difficulty of the conjugacy search problem for $G$
is necessary for security of this protocol.

Originally, it was proposed to use braid groups $B_n$ (see \cite{Birman_book, Epstein,Kessel_Turaev:2009})
as platform groups,
because there was no efficient solution
of the conjugacy search problem for $B_n$ known.
This motivated a lot of research about braid groups.
As a result of recent developments (\cite{BGG1}, \cite{BGG2}, \cite{BGG3}),
there is an opinion
that the conjugacy search problem for $B_n$ can be solved in polynomial time.
If that is true in fact, then
the Sibert et. al. authentication protocol is insecure for $B_n$.
Nevertheless, the same protocol can be used with other platform groups
and, hence, it is important to have tools for analysis of
this type of general Sibert protocols.
We show in the present paper that it is not necessary
to solve the conjugacy search problem for $G$
to break the scheme.
Instead, one can analyze zero-knowledge property of the protocol by
employing ideas from probability theory and show
that the protocol is insecure
under a mild assumption of existence of an efficiently computable length
function for the platform group $G$.
Even for groups with no efficiently computable length function, such as $B_n$, a reasonable
approximation can do the job.

Now, let $\mu$ be a probability measure on a platform group $G$. We say that $\mu$
is {\em left-invariant} if for every $A\subseteq G$ and $g\in G$ the
equality $\mu(A) = \mu(gA)$ holds.
The following result is proved in \cite{SDG}.

\begin{proposition*}[\cite{SDG}]
{\em Let $G$ be a group. If the conjugacy search problem for $G$ is computationally
hard (cannot be solved by a probabilistic polynomial time Turing machine)
and $\mu$ is a left-invariant probability measure on $G$ then the outlined above protocol
is a zero knowledge interactive proof system.
}
\end{proposition*}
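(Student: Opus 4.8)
The plan is to establish the three properties bundled in the conclusion---completeness, soundness, and zero-knowledge---with the bulk of the work in the zero-knowledge part, via an explicit simulator. Completeness is immediate from the key algebraic identity: when $c=1$ we have $y^{-1}wy = (sr)^{-1}w(sr) = r^{-1}(s^{-1}ws)r = r^{-1}tr$, so both verification equations $x = H(y^{-1}ty)$ (for $c=0$) and $x = H(y^{-1}wy)$ (for $c=1$) reduce to the committed value $H(r^{-1}tr)$, and an honest Prover who knows $s$ always succeeds. Soundness I would obtain in the standard Feige--Fiat--Shamir manner: from two accepting transcripts $(x,0,y_0)$ and $(x,1,y_1)$ sharing one commitment, collision-freeness of $H$ forces $y_0^{-1}ty_0 = y_1^{-1}wy_1$, whence the element $y_1y_0^{-1}$ conjugates $w$ to $t$; thus a Prover succeeding with probability noticeably above $2^{-k}$ over the $k$ rounds yields a conjugacy-search solver, contradicting the hardness hypothesis. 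This is precisely where the computational-hardness assumption is consumed.

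For zero-knowledge I would construct a simulator $S$ which, given only the public pair $(w,t)$ and black-box access to an arbitrary polynomial-time (possibly cheating) verifier $V^*$, outputs transcripts indistinguishable from genuine interactions. The simulator uses the usual guess-and-rewind strategy: in each round it picks a bit $c' \in \{0,1\}$ uniformly, samples $y' \sim \mu$, sets the commitment to $x' = H(y'^{-1} t y')$ if $c'=0$ and to $x' = H(y'^{-1} w y')$ if $c'=1$, feeds $x'$ to $V^*$ and reads its challenge $c$; if $c = c'$ it outputs $(x',c,y')$, and otherwise it rewinds $V^*$ and retries the round.

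The crux is a distributional identity supplied by left-invariance of $\mu$ together with $t = s^{-1}ws$. Writing $u = sr$, left-invariance gives $u \sim \mu$ whenever $r \sim \mu$, and $r^{-1}tr = (sr)^{-1}w(sr) = u^{-1}wu$; hence the law of a $\mu$-random conjugate of $t$ coincides exactly with the law of a $\mu$-random conjugate of $w$. Consequently both branches of $S$ emit a commitment with precisely the real commitment distribution $H(r^{-1}tr)$, so the commitment carries no information about the guessed bit, $V^*$'s challenge is independent of $c'$, and $\Pr[c=c'] = 1/2$ in each round, guaranteeing that $S$ halts in expected polynomial time. Conditioned on $c=c'$, one checks branch by branch---again invoking the identity above for the $c'=1$ case---that $(x',c,y')$ has the same joint law as a real transcript, so the simulated and genuine ensembles agree.

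The step I expect to be the main obstacle is the malicious-verifier bookkeeping rather than the algebra: I must argue that the rewinding composes correctly across the $k$ sequential rounds against a $V^*$ that may abort or condition its challenge on the full history, so that the expected running time stays polynomial and the accumulated deviation---arising only from the collision-free hash $H$ and from truncating the number of rewinds---remains negligible. Pinning down the exact indistinguishability flavor (perfect for the idealized message space, and computational once $H$ and bounded rewinding are accounted for) is the delicate part of the write-up.
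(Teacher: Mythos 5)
The paper does not actually reproduce a proof of this proposition; it is quoted as a known result of Sibert--Dehornoy--Girault \cite{SDG}, and your argument is, in substance, the proof given in that source. Specifically, you have all three ingredients exactly as in the standard treatment: completeness from the identity $(sr)^{-1}w(sr)=r^{-1}tr$; soundness by extracting the conjugator $y_1y_0^{-1}$ from two accepting transcripts sharing a commitment (which is precisely where the hardness of the conjugacy search problem is consumed); and zero-knowledge via the guess-and-rewind simulator, whose validity rests on exactly the left-invariance identity you isolate --- that $sr\sim\mu$ whenever $r\sim\mu$, so that the law of $H(y'^{-1}wy')$ with $y'\sim\mu$ coincides with the real commitment law $H(r^{-1}tr)$, making the simulation perfect on the group-element level and the guessed bit independent of the verifier's challenge.
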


Clearly, there are no left-invariant probability measures on braid groups,
used as platform groups in the protocol,
and, therefore, as noticed in \cite{Dehornoy_survey} and
\cite{SDG}, this protocol cannot be a perfect zero
knowledge interactive proof system when used with an infinite group such as $B_n$.
Nevertheless, it is conjectured in \cite{SDG} that the scheme can be computationally
zero knowledge for certain distributions $\mu$ on $B_n$. The authors
supported that conjecture by statistical arguments based on length analysis.

\subsection{The idea of mean-set attack: the shift search problem}
\label{se:idea}

If we look at the protocol outlined in Section \ref{se:protocol}, we
observe that the Prover sends to the Verifier a sequence
of random elements of two types: $r$ and $sr$, where $r$ is a
randomly generated element and $s$ is the Prover's secret
element. Any passive eavesdropper (Eve) can arrange a table of challenge/response
transactions, where each row corresponds to a single round of the protocol,
as shown below,
\begin{center}\small
\begin{tabular}{|l|p{15mm}|p{25mm}|p{25mm}|}
\hline
  Round & Challenge & Response type \# 1 & Response type \# 2\\
\hline
$1$   & $c=1$ & --    & $sr_1$\\
\hline
$2$   & $c=0$ & $r_2$ & --\\
\hline
$3$   & $c=0$ & $r_3$ & --\\
\hline
$4$   & $c=1$ & --    & $sr_4$\\
\hline
$5$   & $c=0$ & $r_5$ & --\\
\hline
$\ldots$ & $\ldots$ & $\ldots$ & $\ldots$ \\
\hline
$n$ & $c=0$ & $r_{n}$ & --\\
\hline
\end{tabular}
\end{center}
and obtain two sets of elements, corresponding
to $c=0$ and $c=1$ respectively:
    $R_0 = \{r_{i_1},\ldots,r_{i_k}\}$
and
    $R_1 = \{sr_{j_1},\ldots,sr_{j_{n-k}}\},$
where all elements $r_i$ are distributed according to $\mu$,
i.e., all
these elements are generated by the same random generator.
Eve's goal is to recover the secret element $s$ based on the intercepted
sequences $R_0$ and $R_1$. We call this problem a {\em shift search problem}.

To explain the idea of the {\em mean-set attack}, assume for a moment
that the group $G$ is an infinite cyclic group $\MZ$.
In that case, we can rewrite the elements of $R_1$ in additive notation $\{s+r_{j_1},\ldots,s+r_{j_{n-k}}\}$.
Then we can compute the empirical average $\ovr_0 = \frac{1}{k}\sum_{m=1}^k r_{i_m}$
of the elements in $R_0 \subset \MZ$
and the empirical average
$\ovr_1 = \frac{1}{n-k}\sum_{l=1}^{n-k} (s+r_{j_l}) = s+\frac{1}{n-k}\sum_{l=1}^{n-k} r_{j_l}$
of the elements in $R_1 \subset \MZ$.
By the strong law of large numbers for real-valued random variables the larger the
sequence $R_0$ is, the closer the value of $\ovr_0$ to the
actual mean $\ME (\mu)$ of the distribution $\mu$
on $\MZ$, induced by $r$. Similarly, the larger the sequence $R_1$ is,
the closer the value of $\ovr_1$ is to the
number $s+\ME (\mu)$. Therefore, subtracting $\ovr_0$ from $\ovr_1$,
we obtain a good guess of what $s$ is.
Observe three crucial properties that allow us to compute the secret
element in the case $G=\MZ$:
\begin{itemize}
    \item[\bf (AV1)]
(Strong law of large numbers for real-valued random variables) If $\{\xi_i\}_{i=1}^\infty$
is a sequence of independent and identically distribute (i.i.d.) real-valued random variables, then
    $$\frac{1}{n}\sum_{i=1}^n \xi_i {\rightarrow} \ME \xi_1$$
with probability one as $n\rightarrow\infty$, provided $\ME (\xi_1) < \infty.$
    \item[\bf (AV2)]
(''Shift`` property or linearity) For any real-valued random variable $\xi$, the formula
    $$\ME(c + \xi) = c + \ME(\xi)$$
holds.
    \item[\bf (AV3)]
(Efficient computations) The average value $\frac{1}{n}\sum_{i=1}^n \xi_i$
is efficiently computable.
\end{itemize}
Geometrically, we can interpret this approach as follows.
Given a large sample of random, independent,
and identically distributed points $r_{i_1},\ldots,r_{i_k}$
and a large sample of shifted points $s+r_{j_1},\ldots,s+r_{j_{n-k}}$
on the real line,
the shift $s$ is ``effectively visible''.

It turns out that the same is true in general infinite groups.
One can generalize a number of mathematical tools of the classical probability theory
to finitely generated groups (see \cite{MosUsh:SLLN1} and Section \ref{se:SLLN} below)
in order to have the counterparts of (AV1), (AV2), and (AV3).
Indeed,
\begin{itemize}
    \item
for a random group element $\xi:\Omega\rightarrow G$, one can define a set $\ME (\xi) \subseteq G$
called the {\em mean-set},
    \item
for a sample of $n$ random group elements $\xi_1,\ldots,\xi_n$, one can define
their average -- a set $\MS_n = \MS(\xi_1,\ldots,\xi_n) \subseteq G$
called the {\em sample mean-set} of elements $\xi_1,\ldots,\xi_n$,
\end{itemize}
so that we have a ''shift`` property $\ME(s \xi)=s \ME(\xi)$
and a
generalization of the strong law of large numbers (SLLN)
for groups with respect to $\ME (\xi)$ in a sense that
$\MS(\xi_1,\ldots,\xi_n)$ converges to $\ME(\xi_1)$ as
$n\rightarrow\infty$ with probability one (see Section \ref{se:SLLN}
for precise definitions and statements).
In addition, assume that
sample mean $\MS(\xi_1,\ldots,\xi_n)$ is efficiently computable.
Using the operator $\MS$, Eve can compute a set
    $$\MS(sr_{j_1},\ldots,sr_{j_{n-k}}) \cdot [\MS(r_{i_1},\ldots,r_{i_k})]^{-1},$$
which should contain $s$ with high probability when $n$ is sufficiently large.
This is the idea of the mean-set attack and our approach to the {\em shift search problem}.
Furthermore, one can show that the more rounds of the protocol are
performed, the more information about the secret key our attack gains
(note that at the same time the protocol is iterated by its nature,
and large number of rounds is important for its
reliability in a sense of the soundness property).
The discussion above leads to the main theoretical results of this paper,
proved in Section \ref{se:attack}.

\vspace{2mm}\noindent{\bf Theorem A.} {\bf(Mean-set attack principle -- I)}
{\em
Let $G$ be a group, $X$ a finite generating set for $G$, $s\in G$ a secret fixed element,
and $\xi_1, \xi_2, \ldots$ a sequence of randomly generated i.i.d.
group elements,
such that $\ME\xi_1 = \{g\}$. If $\xi_1,\ldots,\xi_n$ is a sample of random elements of $G$
generated by the Prover,
$c_1,\ldots,c_n$ a succession of random bits (challenges) generated
by the Verifier,
and
$$y_i =
    \begin{cases}
    r_i & \mbox{if } c_i=0; \\
    s r_i & \mbox{if } c_i=1\\
    \end{cases}
    $$
    random elements representing responses of the Prover,
then there exists a constant $D=D(G, \mu)$ such that
    $$\BP \rb{s\not\in\MS \rbb{\{y_i \mid c_i=1, i=1,\ldots,n\}} \cdot \MS\rbb{\{y_i \mid c_i=0, i=1,\ldots,n\}}^{-1}} \le \frac{D}{n}.$$
}

\vspace{2mm}\noindent{\bf Theorem B.} {\bf(Mean-set attack principle -- II)}
{\em
If, in addition to the assumptions of Theorem A, the distribution $\mu$
has finite support, then there exists a constant $D=D(G, \mu)$ such that
    $$\BP \rb{s\not\in\MS \rbb{\{y_i \mid c_i=1, i=1,\ldots,n\}} \cdot \MS\rbb{\{y_i \mid c_i=0, i=1,\ldots,n\}}^{-1}} \le O(e^{-Dn}).$$
}

\subsection{Outline}
Section \ref{se:graphs_groups} reviews some necessary
graph- and group-theoretic preliminaries that constitute the
setting of our work. In Section \ref{se:SLLN}, we recall the notion of the mean-set (expectation) of
a (graph-)group-valued random element, introduced in \cite{MosUsh:SLLN1}, and main theorems relevant
to this object to prepare the ground for the main results;
in particular, we discuss the ''shift`` property,
the strong law of large numbers,
and the analogues of Chebyshev and Chernoff-like inequalities for graphs and groups.
In Section \ref{se:mean_set_compute}, we propose an algorithm for computing
mean-sets. Next, we turn to formulations and proofs of the main theoretical results
of this paper, the mean-set attack principles under different assumptions.
This task is carried out in Section \ref{se:attack}.
At the end of that section, we indicate that even if the proposed
algorithm fails, we can still gain some information about the secret
key of the Prover. In other words,
the more rounds of the protocol are
performed, the more information
about the secret key we can gain.
In Section \ref{se:classical_generation}, we present results of our experiments
with the classical key generation according to \cite{Dehornoy_survey}.
Section \ref{se:special_key_generation} is concerned with results of experiments
with the alternative (special) key generation proposed by Sibert et al. in \cite{SDG}.
At the end, in Section \ref{se:defense}, we discuss possible methods for
defending against
the mean-set attack.

\section{Preliminaries}

\label{se:graphs_groups}

Let us briefly recall some definitions of group and graph theory.
For
a better insight into graph theory, the reader is referred to
\cite{West:book2000}, while \cite{Kurosh_book} can serve as a good
introduction into group theory.

\subsection{Graphs}
\label{se:graphs}

An {\em undirected graph} $\Gamma$ is an ordered pair of sets
$(V,E)$ where
\begin{itemize}
    \item
$V = V(\Gamma)$ is called the {\em vertex set};
    \item
$E = E(\Gamma)$ is a set of unordered pairs $(v_1,v_2) \in V\times
V$ called the {\em edge set}.
\end{itemize}
If $e = (v_1,v_2) \in E$ then we say that $v_1$
and $v_2$ are {\em adjacent} in $\Gamma$. The number
of vertices adjacent to $v$ is called the {\em degree} of $v$. We say that the graph $\Gamma$ is {\em locally-finite} if every vertex has a finite degree.

A {\em directed graph} $\Gamma$ is an ordered pair of sets $(V,E)$
where $E = E(\Gamma)$ is a set of ordered pairs $(v_1,v_2) \in
V\times V$. If $e = (v_1,v_2) \in E$, then we say that $v_1$ is the
{\em origin} of the edge $e$, denoted by $o(e)$, and $v_2$ is the
{\em terminus} of $e$, denoted by $t(e)$.
An undirected graph can be viewed as a directed graph in which a pair $(v_1,v_2) \in E$
serves as two edges $(v_1,v_2)$ and $(v_2,v_1)$.

A {\em path} $p$ in a directed graph $\Gamma$ is a finite sequence
of edges $e_1,\ldots,e_n$ such that $t(e_i) = o(e_{i+1})$. The
vertex $o(e_1)$ is called the {\em origin} of the path $p$ and is
denoted by $o(p)$. The vertex $t(e_n)$ is called the {\em terminus}
of the path $p$ and is denoted by $t(p)$. The number $n$ is called
the {\em length} of the path $p$ and is denoted by $|p|$. We say
that two vertices $v_1,v_2\in V(\Gamma)$ are {\em connected}, if
there exists a path from $v_1$ to $v_2$ in $\Gamma$. The graph
$\Gamma$ is {\em connected} if every pair of vertices is connected.

The {\em distance} between $v_1$ and $v_2$ in a graph $\Gamma$ is
the length $d(v_1,v_2)$ of a shortest path between $v_1$ and $v_2$.
If $v_1$ and $v_2$ are disconnected, then $d(v_1,v_2) = \infty$.
We say that a path $p  =e_1,\ldots, e_n$ from $v_1$ to $v_2$ is {\em
geodesic} in a graph $\Gamma$ if $d(o(p),t(p)) = d(v_1,v_2) = n$,
i.e., if $p$ is a shortest path from $v_1$ to $v_2$.

A path $p = e_1,\ldots, e_n$ in a graph $\Gamma$ is {\em closed}, if
$o(p) = t(p)$. In this case we say that $p$ is a {\em cycle} in
$\Gamma$. A path $p$ is {\em simple}, if no proper segment of $p$ is
a cycle. The graph $\Gamma$ is a {\em tree} if it does not contain a
simple cycle.

\subsection{Groups and Cayley graphs}

Consider a finite set, also called {\em alphabet}, $X =
\{x_1,\ldots,x_n\}$, and let $X^{-1}$ be the set of formal inverses
$\{x_1^{-1},\ldots,x_n^{-1}\}$ of elements in $X$. This defines an
involution $^{-1}$ on the set $X^{\pm 1}:= X\cup X^{-1}$ which maps
every symbol $x\in X$ to its formal inverse $x^{-1} \in X^{-1}$ and
every symbol $x^{-1} \in X^{-1}$ to the original $x \in X$. An
alphabet $X$ is called a {\em group alphabet} if $X^{-1}\subseteq X$,
and there is an involution which maps elements of $X$ to their
inverses. An {\em $X$-digraph} is a graph $(V,E)$ with edges labeled
by elements in $X^{\pm 1}= X \cup X^{-1}$ such that for any edge $e
= u \stackrel{x}{\rightarrow} v$ there exists an edge $v
\stackrel{x^{-1}}{\rightarrow} u$, which is called the inverse of
$e$ and is denoted by $e^{-1}$. See \cite{KM} for more information
on $X$-digraphs.

Let $G$ be a group and $X \subset G$ a set of generators for $G$,
i.e. $G = \gp{X}$. Assume that $X$ is closed under inversion, i.e.,
$X = X^{\pm 1}$. The {\em Cayley graph} $C_G(X)$ of $G$ relative to
$X$ is a labeled graph $(V,E)$, where the vertex set is $V=G$, and
the edge set $E$ contains all edges of the form $g_1
\stackrel{x}{\rightarrow} g_2$ where $g_1,g_2\in G$, $x\in X$ and
$g_2 = g_1x$ and only them. The {\em distance} between elements $g_1,g_2 \in G$
relative to the generating set $X$ is the distance in the graph
$C_G(X)$ between vertices $g_1$ and $g_2$ or, equivalently,
    $$d_X(g_1,g_2)=\min\{n \mid g_1 x_1^{\varepsilon_1}x_2^{\varepsilon_2} \ldots x_n^{\varepsilon_n}=g_2 \mbox{ for some } x_i\in X, \varepsilon_i=\pm 1\}.$$

\subsection{Random (graph-)group elements}
\label{se:SLLN}

In this section, we recall some of the main notions
and results of \cite{MosUsh:SLLN1} that are
employed further in the present paper.
Let $\Gamma = (V,E)$ be a locally-finite connected graph and
$(\Omega,\CF,P)$ a probability space.
A measurable mapping $\xi:\Omega \rightarrow V(\Gamma)$ is called
a {\em random graph element} defined on a given
probability space.
A random $\Gamma$-element $\xi$ induces an atomic probability measure $\mu$ on $V(\Gamma)$
defined in a usual way as
    $$\mu(v) = \mu_\xi(v) = \BP\{\omega \mid \xi(\omega)= v \}, ~ v\in V(\Gamma).$$
Define a {\em weight function} $M_\xi:V(\Gamma) \rightarrow \MR$ by
    $$M(v) = M_\xi(v) = \sum_{s\in V(\Gamma)} d^2(v,s) \mu_\xi(s),$$
where $d(v,s)$ is the distance between $v$ and $s$ in $\Gamma$. The domain
of $M$ is the set
    $$domain(M) = \{v\in\ V(\Gamma) \mid \sum_{s\in V(\Gamma)} d^2(v,s) \mu_\xi(s)<\infty \}.$$
It is proved in \cite{MosUsh:SLLN1} that for any distribution $\mu$ on $V(\Gamma)$ either
$domain(M) = \emptyset$ or $domain(M) = V(\Gamma)$.
In the case when $domain(M) = V(\Gamma)$, we say that $M(\cdot)$ is {\em totally defined}.
Given that $domain(M) = V(\Gamma)$, the {\em mean-set} of a $\Gamma$-valued $\xi$
is defined to be a set of vertices minimizing the weight function, i.e.,
\begin{equation}\label{eq:mean-set}
    \ME (\xi) = \{v \in V(\Gamma) \mid  M(v) \le M(u), ~~ \forall u\in V(\Gamma) \}.
\end{equation}
Sometimes we write $\ME(\mu)$ and speak of the mean-set of distribution
$\mu$.
Using the Cayley graph construction one can similarly define a notion of the mean-set
for a finitely generated group $G$
(relative to a fixed generating set).
Similar mean values (in different settings)
are used rather often; see \cite{MosUsh:SLLN1} for some history and literature sources.
Below, we recall some results proved in \cite{MosUsh:SLLN1}.

\begin{lemma}[\cite{MosUsh:SLLN1}]\label{le:E_finite}
Let $\xi$ be a random $\Gamma$-element, where $\Gamma$ is a connected locally-finite graph, with
totally defined weight function $M_\xi(\cdot)$. Then the mean-set
$\ME(\xi)$ is non-empty and finite.
\end{lemma}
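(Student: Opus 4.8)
The plan is to show that the weight function $M = M_\xi$ is \emph{coercive} --- it grows without bound as one moves away from any fixed vertex --- so that its sublevel sets are finite; both attainment of the minimum (non-emptiness) and finiteness of the minimizing set then follow from local finiteness of $\Gamma$.

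First I would fix a base vertex $v_0 \in V(\Gamma)$. Since $\Gamma$ is connected and locally-finite, $V(\Gamma)$ is countable, so $\mu_\xi$ is a probability measure on a countable set and hence has at least one atom: there is a vertex $s_0$ with $p := \mu_\xi(s_0) > 0$. Keeping only this single term in the defining sum gives, for every $v \in V(\Gamma)$, the lower bound
$$M(v) \ge d^2(v, s_0)\, \mu_\xi(s_0) = p\, d^2(v, s_0).$$
By the reverse triangle inequality $d(v, s_0) \ge d(v, v_0) - d(v_0, s_0)$, so whenever $d(v, v_0) > d(v_0, s_0)$ one has $M(v) \ge p\,\rb{d(v, v_0) - d(v_0, s_0)}^2$, which tends to $\infty$ as $d(v, v_0) \to \infty$. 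This is the coercivity estimate.

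Next I would pass to a finite sublevel set. Because $M$ is totally defined, $M(v_0) < \infty$. Putting $R := d(v_0, s_0) + \sqrt{M(v_0)/p}$, the coercivity bound forces every vertex $v$ with $M(v) \le M(v_0)$ to satisfy $d(v, v_0) \le R$; that is, the sublevel set $\{v \mid M(v) \le M(v_0)\}$ is contained in the ball of radius $R$ about $v_0$. I would then invoke the standard fact that in a connected locally-finite graph every ball of finite radius contains only finitely many vertices (proved by induction on the radius, since each vertex contributes only finitely many neighbours). Hence this sublevel set is finite and non-empty, as it contains $v_0$.

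Finally, every global minimizer of $M$ lies in this finite sublevel set, so the infimum of $M$ over all of $V(\Gamma)$ equals its minimum over a finite set, which is attained. Therefore $\ME(\xi)$ is non-empty, and being a subset of a finite set it is finite. The step requiring the most care --- the main obstacle --- is the coercivity estimate: it rests crucially on $\mu_\xi$ having at least one atom (automatic here, as it is a probability measure on a countable vertex set) combined with the reverse triangle inequality, while total definedness of $M$ is precisely what guarantees the cutoff radius $R$ is finite.
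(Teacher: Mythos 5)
Your proof is correct: the coercivity bound $M(v) \ge p\,d^2(v,s_0)$ from a single atom $s_0$, combined with the reverse triangle inequality and the finiteness of balls in a connected locally-finite graph, does confine all minimizers to a finite non-empty sublevel set, and every step (existence of the atom via countability of $V(\Gamma)$, finiteness of $R$ via total definedness) is justified. This paper does not reproduce a proof of Lemma~\ref{le:E_finite} --- it is quoted from \cite{MosUsh:SLLN1} --- and your argument is essentially the same coercivity approach used there, the only cosmetic difference being that the cited proof bounds $M$ from below using the mass of a finite ball around a base vertex rather than a single atom.
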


The next property is an analogue of the property $\ME(c+\xi) = c+\ME\xi$
for real-valued random variables.

\begin{proposition}[Shift property, \cite{MosUsh:SLLN1}]
\label{pr:g_shift}
Let $G = \gp{X}$ be a finitely generated group and $g \in G$.
Let $\xi$ be a random $G$-element. Then for a
random element $\xi_g$ defined by $\xi_g(\omega) := g\xi(\omega)$ we
have $\ME (\xi_g) = g\ME (\xi).$
\end{proposition}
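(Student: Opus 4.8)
The plan is to exploit the fact that the Cayley graph $C_G(X)$ is vertex-transitive under left translation: for a fixed $g \in G$, the map $h \mapsto gh$ sends each edge $h_1 \stackrel{x}{\rightarrow} h_1 x$ to the edge $g h_1 \stackrel{x}{\rightarrow} g h_1 x$, and hence is a label-preserving automorphism of $C_G(X)$. Consequently the word metric is left-invariant, $d_X(g h_1, g h_2) = d_X(h_1, h_2)$, or equivalently $d(v, gu) = d(g^{-1} v, u)$ for all $u, v \in G$. This single invariance is the engine of the whole argument; everything else is bookkeeping on the weight function $M$.

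First I would compute the distribution of $\xi_g$. Since $\xi_g(\omega) = g\xi(\omega)$, we have $\mu_{\xi_g}(v) = \BP(g\xi = v) = \BP(\xi = g^{-1}v) = \mu_\xi(g^{-1} v)$, using that left multiplication by $g$ is a bijection of $G$. Substituting into the definition of the weight function gives $M_{\xi_g}(v) = \sum_{s \in V(\Gamma)} d^2(v,s)\, \mu_\xi(g^{-1}s)$. Re-indexing the sum with $u = g^{-1} s$ (again a bijection of $G$, so the reindexing is legitimate and preserves absolute convergence) and applying left-invariance of the distance, I obtain the clean identity
$$M_{\xi_g}(v) = \sum_{u \in V(\Gamma)} d^2(v, gu)\, \mu_\xi(u) = \sum_{u \in V(\Gamma)} d^2(g^{-1}v, u)\, \mu_\xi(u) = M_\xi(g^{-1} v).$$
In particular $M_{\xi_g}$ is totally defined precisely when $M_\xi$ is, so by Lemma \ref{le:E_finite} both mean-sets are non-empty and finite and the statement is meaningful.

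It then remains to read off the minimizers. From $M_{\xi_g}(v) = M_\xi(g^{-1}v)$ we see that $v$ minimizes $M_{\xi_g}$ over $V(\Gamma)$ if and only if $g^{-1}v$ minimizes $M_\xi$, because $w \mapsto g^{-1}w$ ranges over all of $G$ as $w$ does. Thus $v \in \ME(\xi_g)$ iff $g^{-1}v \in \ME(\xi)$, i.e. $\ME(\xi_g) = g\, \ME(\xi)$, as claimed. I do not expect a genuine obstacle here: the only points demanding care are the legitimacy of the change of variables in a possibly infinite sum (handled by absolute convergence on $domain(M)$, together with the dichotomy that $domain(M)$ is either empty or all of $V(\Gamma)$) and the verification that left translation is a graph automorphism, which is immediate from the edge description of the Cayley graph.
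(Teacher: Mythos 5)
Your proof is correct and follows essentially the same route as the paper, which disposes of the proposition with exactly your key observation: the word metric is left-invariant, $d_X(g_1,g_2) = d_X(sg_1,sg_2)$, and the shift property follows. Your write-up merely fills in the bookkeeping (the identity $M_{\xi_g}(v) = M_\xi(g^{-1}v)$ and the reading-off of minimizers) that the paper leaves as "easy to see."
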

\noindent It is easy to see that this property
follows from the fact that for any
$g_1,g_2,s \in G$ the equality
$d_X(g_1,g_2) = d_X(sg_1,sg_2)$ holds, where $d_X(g_1,g_2)$ is
the {\em distance} between elements $g_1,g_2 \in G$
relative to $X$ (see Section \ref{se:graphs_groups}).

Now let $\xi_1, \ldots, \xi_n$ be a sample of independent and
identically distributed graph-valued random elements
$\xi_i:\Omega\rightarrow V(\Gamma)$ defined on a given probability
space $(\Omega,\CF,\BP)$ and
$\mu_n(v)$ be the relative frequency
   $$ \mu_n(v)= \mu_n(v,\omega) = \frac{|\{i\mid \xi_i(\omega)=v,~~ 1\le i\le n\}|}{n}$$
with which the value $v\in V(\Gamma)$ occurs in the random sample
$\xi_1(\omega), \ldots, \xi_n(\omega)$. Let $$M_{n}(v) =
\sum_{i\in V(\Gamma)} d^2(v,i) \mu_n(i)$$ be the random weight,
called the {\em sampling weight}, corresponding to $v\in V(\Gamma)$,
and $M_{n}(\cdot)$ the resulting random {\em sampling weight
function}.
The set of vertices
    $$\MS_n = \MS(\xi_1,\ldots,\xi_n)= \{v \in V(\Gamma) \mid  M_n(v) \le M_n(u), ~~ \forall u\in V(\Gamma)\}$$
is called the {\em sample mean-set} (or {\em sample center-set})
relative to $\xi$.
The next theorem shows that the sets $\MS_n$ and $\ME (\xi)$ in $\Gamma$
play roles analogous  to the classical average of real values $\frac{x_1+\ldots+x_n}{n}$ and the classical
expectation $\ME$ of a real-valued random variable respectively,
in the non-commutative case. In other words, the strong law of large
numbers generalized to graphs and groups states that our (empirical) sample mean-set
$\MS_n$ converges to the
(theoretical) mean-set $\ME(\xi)$ as $n\rightarrow\infty$.

\begin{theorem}[Strong law of large numbers, \cite{MosUsh:SLLN1}]
\label{th:SLLN}
Let $\Gamma$ be a locally-finite connected graph and $\{\xi_i\}_{i=1}^\infty$ a
sequence of i.i.d. random $\Gamma$-elements. If the weight function $M_{\xi_1}(\cdot)$
is totally defined and
    $\ME(\xi_1) = \{v\}$
for some $v\in V(\Gamma)$, then
    $$\lim_{n\rightarrow\infty} \MS_n = \ME(\xi_1)$$
with probability one.
\end{theorem}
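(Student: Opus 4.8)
The plan is to reduce the graph-valued statement to the classical strong law of large numbers applied one vertex at a time, and then to upgrade pointwise convergence of the sampling weights to convergence of their arg-min. First I would observe that for each fixed vertex $u \in V(\Gamma)$ the sampling weight is an empirical average of i.i.d. real random variables,
$$M_n(u) = \frac{1}{n}\sum_{k=1}^n d^2(u,\xi_k),$$
whose common expectation is exactly $M(u) = M_{\xi_1}(u)$, which is finite because $M_{\xi_1}(\cdot)$ is totally defined. The classical strong law then yields $M_n(u) \to M(u)$ almost surely. Since $\Gamma$ is connected and locally finite, every ball is finite and $V(\Gamma) = \bigcup_\rho B(v,\rho)$ is countable, where $B(v,\rho):=\{u\in V(\Gamma)\mid d(v,u)\le\rho\}$. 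Intersecting these countably many probability-one events therefore produces a single event of probability one on which $M_n(u) \to M(u)$ holds simultaneously for every vertex $u$.

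The delicate point is that pointwise convergence of $M_n$ at each vertex does not by itself force the minimizer of $M_n$ to settle at $v$, because $\Gamma$ may be infinite and a far-away vertex could, a priori, acquire a small sampling weight through fluctuations. To rule this out I would establish a uniform confinement estimate. Using $M(v) < \infty$ together with a Markov-type bound, $\mu\rb{V(\Gamma)\setminus B(v,\rho)} \le M(v)/\rho^2 \to 0$, so the mass concentrates in finite balls about $v$. I then fix a radius $R_0$ with $\rb{R_0/2}^2 \mu\rb{B(v,R_0/2)} > M(v)+2$, which is possible since the prefactor tends to infinity while the mass tends to $1$. For any vertex $u$ with $d(u,v) \ge R_0$ and any sample point $\xi_k \in B(v,R_0/2)$ the triangle inequality gives $d(u,\xi_k) \ge R_0/2$, whence
$$M_n(u) \ge \rb{R_0/2}^2\, \mu_n\rb{B(v,R_0/2)}.$$
Applying the classical law to the indicators $\IND\{\xi_k \in B(v,R_0/2)\}$ shows $\mu_n\rb{B(v,R_0/2)} \to \mu\rb{B(v,R_0/2)}$ almost surely, so on the event above $M_n(u) > M(v)+1$ for all $u$ outside $B(v,R_0)$ once $n$ is large. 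Since also $M_n(v) \to M(v)$, this forces $\MS_n \subseteq B(v,R_0)$ for all large $n$.

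It then remains to compare weights inside the finite ball $B(v,R_0)$, which contains only finitely many vertices by local finiteness. Here the hypothesis $\ME(\xi_1) = \{v\}$ is crucial: the mean-set being a singleton means $M(u) > M(v)$ strictly for every $u \ne v$. For each of the finitely many $u \in B(v,R_0)\setminus\{v\}$, simultaneous pointwise convergence gives $M_n(u) \to M(u)$ and $M_n(v)\to M(v)$ with $M(u) > M(v)$, so $M_n(v) < M_n(u)$ for all large $n$. Combining this with the confinement step, almost surely there is an $N$ such that $M_n(v) < M_n(u)$ for every $u \ne v$ and every $n \ge N$, i.e.\ $\MS_n = \{v\} = \ME(\xi_1)$ for all $n \ge N$, which is the asserted convergence. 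I expect the confinement estimate to be the main obstacle, as it is the only step demanding control that is uniform over the infinitely many vertices, and it is precisely where the finiteness of the second moment $M(v) < \infty$ is indispensable.
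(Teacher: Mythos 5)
The paper itself contains no proof of this theorem: it is quoted verbatim from \cite{MosUsh:SLLN1}, so there is no in-text argument to compare against. Your proof is correct and complete, and it follows the same scheme used in that cited source: the classical SLLN applied pointwise to $M_n(u)=\frac{1}{n}\sum_{k=1}^n d^2(u,\xi_k)$ over the countable vertex set, a confinement (cut-off) lemma forcing $\MS_n$ into a finite ball via the uniform lower bound $M_n(u)\ge \rb{R_0/2}^2\mu_n\rb{B(v,R_0/2)}$ for all $u$ outside $B(v,R_0)$, and a finite comparison using the singleton hypothesis $M(u)>M(v)$ for $u\ne v$, yielding that almost surely $\MS_n=\{v\}$ for all sufficiently large $n$. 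Your identification of the confinement estimate as the one step requiring uniformity over infinitely many vertices is exactly right; the rest is routine once that is in place.
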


Similar result holds for multi-vertex mean-sets. See \cite{MosUsh:SLLN1}
for technical conditions needed, as well as other details.
The simplest
version of multi-vertex SLLN in terms of {\em limsup} is as follows:

\begin{theorem}[Multi-Vertex SLLN, \cite{MosUsh:SLLN1}]
\label{th:SLLN2}
Let $\Gamma$ be a locally-finite connected graph and
$\{\xi_i\}_{i=1}^\infty$ be a sequence of i.i.d. random $\Gamma$-elements.
Assume that the weight function $M_{\xi_1}(\cdot)$ is totally defined and $\ME(\xi) = \{v_1,\ldots,v_k\}$, where $k\ge 4$.
If $\ME (\xi_1) \subseteq supp(\mu)$ then
    $$\limsup_{n\rightarrow \infty}\MS_n = \ME(\xi_1)$$
holds with probability one.
\end{theorem}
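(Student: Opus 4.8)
The plan is to prove the two inclusions $\limsup_n \MS_n \subseteq \ME(\xi_1)$ and $\ME(\xi_1)\subseteq \limsup_n \MS_n$ separately. Write $m^\ast = M_{\xi_1}(v_1)=\cdots=M_{\xi_1}(v_k)=\min_v M_{\xi_1}(v)$ for the minimal weight, note that $M_n(v)=\frac1n\sum_{l=1}^n d^2(v,\xi_l)$, and set $S_n^{(i)} = \sum_{l=1}^n\rb{d^2(v_i,\xi_l)-m^\ast} = n\rb{M_n(v_i)-m^\ast}$ for the centered partial sums attached to the mean-set vertices. The basic engine throughout is the classical (Kolmogorov) SLLN applied coordinatewise: since $M_{\xi_1}(\cdot)$ is totally defined, for each fixed $v$ the i.i.d.\ summands $d^2(v,\xi_l)$ have finite mean $M_{\xi_1}(v)$, so $M_n(v)\to M_{\xi_1}(v)$ with probability one, and each $S_n^{(i)}$ is a mean-zero random walk with i.i.d.\ integer increments.

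First I would establish $\limsup_n\MS_n\subseteq\ME(\xi_1)$, which is essentially the argument already behind Theorem~\ref{th:SLLN}. Split $V(\Gamma)$ into a ball $B=B(v_1,R_0)$ and its complement. Using $d^2(v,\xi_l)\ge d(v,v_1)^2-2d(v,v_1)d(v_1,\xi_l)$ together with $\frac1n\sum_l d(v_1,\xi_l)\to \ME\, d(v_1,\xi_1)<\infty$, one gets, for $R_0$ large enough, a uniform bound $M_n(v)>m^\ast+1$ for all $v\notin B$ and all large $n$; local finiteness makes $B$ finite. On the finite set $B$ the coordinatewise SLLN applies simultaneously, so for $u\in B\setminus\ME(\xi_1)$ we have $M_n(u)\to M_{\xi_1}(u)>m^\ast$ while $\min_i M_n(v_i)\to m^\ast$. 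Hence, with probability one, for all large $n$ every $v\notin\ME(\xi_1)$ satisfies $M_n(v)>\min_i M_n(v_i)$, so $\MS_n\subseteq\ME(\xi_1)$, giving the $\subseteq$ inclusion.

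The hard direction is $\ME(\xi_1)\subseteq\limsup_n\MS_n$, i.e.\ that each $v_i$ is a sample center for infinitely many $n$. By the previous paragraph it suffices, for fixed $i$, to show that the event $\{S_n^{(i)}=\min_{1\le j\le k}S_n^{(j)}\}$ occurs for infinitely many $n$ almost surely. This is a statement about the joint fluctuations of the $k$-dimensional mean-zero walk $S_n=(S_n^{(1)},\dots,S_n^{(k)})$, equivalently about its projection onto the $(k-1)$-dimensional ``difference'' space. Here the hypothesis $\ME(\xi_1)\subseteq supp(\mu)$ does the crucial work: because $\mu(v_i)>0$, the increment vector attains the value whose $i$-th entry is the minimal $-m^\ast$ (when $\xi_l=v_i$) with positive probability, and comparing the vectors produced by the distinct support points $v_1,\dots,v_k$ shows the increment covariance form is non-degenerate on the difference space. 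I would then invoke the multidimensional central limit theorem: $S_n/\sqrt n$ is asymptotically a non-degenerate Gaussian whose (full) support meets the open cone $\CC_i=\{x: x_i<x_j,\ j\neq i\}$, so the Gaussian mass $\gamma(\CC_i)$ is positive. Passing to a rapidly growing subsequence $n_1<n_2<\cdots$ along which the fresh increments $S_{n_{m+1}}-S_{n_m}$ are independent and dominate the current position (since $\|S_{n_m}\|=O(\sqrt{n_m\log\log n_m})=o(\sqrt{n_{m+1}})$ by the law of the iterated logarithm), the events $\{S_{n_{m+1}}\in\CC_i\}$ become asymptotically independent with probabilities bounded below by $\gamma(\CC_i)$; the second Borel--Cantelli lemma then yields $S_n\in\CC_i$ infinitely often almost surely.

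The main obstacle is precisely this last step, and it is where the restriction $k\ge 4$ is felt. For $k\le 3$ the difference walk has dimension $\le 2$ and is recurrent, so each coordinate is the minimum infinitely often for soft reasons; for $k\ge 4$ the difference walk is transient, $\|S_n\|\to\infty$, and recurrence is unavailable. The delicate point is thus to show that, despite transience, the \emph{angular} part of the walk still enters every cone $\CC_i$ infinitely often, and the Gaussian-approximation-plus-Borel--Cantelli scheme above is how I would make this rigorous, with the non-degeneracy supplied by $\ME(\xi_1)\subseteq supp(\mu)$ guaranteeing that each cone carries positive limiting Gaussian mass. Combining the two inclusions gives $\limsup_n\MS_n=\ME(\xi_1)$ with probability one.
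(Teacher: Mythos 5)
A preliminary remark: this theorem is stated in the paper as an imported result from \cite{MosUsh:SLLN1}; the paper itself contains no proof of it, so your proposal can only be measured against the strategy of that source, which for $k\ge 4$ is likewise a CLT-on-the-difference-walk argument. Your skeleton --- the inclusion $\limsup_n\MS_n\subseteq\ME(\xi_1)$ by the ball-splitting SLLN argument, the consequent reduction of ``$v_i\in\MS_n$ i.o.'' to ``$S_n^{(i)}=\min_j S_n^{(j)}$ i.o.'', and a CLT/cone argument for the latter --- is the right one. However, two of your steps have real gaps. The first is a moment problem: total definedness of $M_{\xi_1}$ gives only $\ME\, d^2(v,\xi_1)<\infty$, whereas the multidimensional CLT for $S_n/\sqrt n$ and the LIL for $\|S_{n_m}\|$, which you apply to the full $k$-dimensional walk, need finite variance of the increments $d^2(v_i,\xi_l)-m^\ast$, i.e.\ finite \emph{fourth} moments of the distance --- not assumed anywhere. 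The repair is to work only with the difference walk $T_n=\bigl(S_n^{(j)}-S_n^{(i)}\bigr)_{j\ne i}$, which is all the event depends on: by the triangle inequality $\modb{d^2(v_j,\xi)-d^2(v_i,\xi)}\le d(v_i,v_j)\bigl(d(v_i,\xi)+d(v_j,\xi)\bigr)$, so its increments do have finite variance under the stated hypothesis. You name the difference space but then run the CLT and the LIL on $S_n$ itself, so as written these invocations are unjustified.

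The second gap is the non-degeneracy claim, and it sits exactly where the hypothesis $\ME(\xi_1)\subseteq supp(\mu)$ must do its work. You assert that the support condition makes the increment covariance non-degenerate on the difference space, and your lower bound $\gamma(\CC_i)>0$ rests on this; but ``comparing the vectors produced by the distinct support points'' is not a proof, and the claim is dubious in general: when $supp(\mu)=\ME(\xi_1)$, degeneracy amounts to the squared-distance matrix $(d^2(v_j,v_l))_{j,l}$ having a kernel vector orthogonal to $\mathbf{1}$, which the hypotheses do not visibly exclude. The good news is that non-degeneracy is not needed. What $\mu(v_i)>0$ actually gives is the observation you make only in passing: with positive probability the increment of $T_n$ equals $W(v_i)=(d^2(v_j,v_i))_{j\ne i}$, whose coordinates are all strictly positive. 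Hence $W(v_i)$ lies in the support of the increment law, hence in the support $L$ (the range of the covariance) of the limiting --- possibly degenerate --- Gaussian; since $\CC_i$ is open and meets $L$, that Gaussian gives $\CC_i$ positive mass. Given this, you can also discard the subsequence/LIL/``asymptotic independence'' scheme, whose Borel--Cantelli step is itself loose (the events are not independent, so the second Borel--Cantelli lemma does not apply; one would need L\'evy's conditional version): since $\CC_i$ is a cone, $\BP(T_n\in\CC_i)=\BP(T_n/\sqrt n\in\CC_i)$, whose $\liminf$ is at least the Gaussian mass of $\CC_i$ by the portmanteau theorem; the event $\{T_n\in\CC_i\mbox{ i.o.}\}$ is exchangeable in the i.i.d.\ variables $\xi_l$, so reverse Fatou plus the Hewitt--Savage zero--one law gives it probability one. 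With these two repairs your outline becomes a correct proof.
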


Moreover, the following asymptotic upper bounds
(analogues of the classical Chebyshev and Chernoff bounds) on convergence rate hold:

\begin{theorem}[Chebyshev's inequality for graphs, \cite{MosUsh:SLLN1}]
\label{th:chebyshev}
Let $\Gamma$ be a locally-finite connected graph and
$\{\xi_i\}_{i=1}^\infty$ a sequence of i.i.d. random $\Gamma$-elements.
If the weight function $M_{\xi_1}(\cdot)$ is totally defined
then there exists a constant $C = C(\Gamma,\xi_1)>0$ such that
\begin{equation}\label{eq:Chebyshev2}
    \BP( \MS(\xi_1,\ldots,\xi_n) \not\subseteq \ME(\xi_1) ) \le \frac{C}{n}.
\end{equation}
\end{theorem}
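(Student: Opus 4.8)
The plan is to fix a global minimizer $v^*\in\ME(\xi_1)$, which exists and lies in a finite set by Lemma~\ref{le:E_finite}, and to exploit that the sampling weight is an unbiased estimator: $M_n(v)=\frac1n\sum_{j=1}^n d^2(v,\xi_j)$ is an average of i.i.d.\ variables with mean $M(v)$. Since $\{\MS_n\not\subseteq\ME(\xi_1)\}$ forces some $v\notin\ME(\xi_1)$ to satisfy $M_n(v)\le M_n(v^*)$, it suffices to bound $\BP(\exists\,v\notin\ME(\xi_1):M_n(v)\le M_n(v^*))$ by $C/n$. I would split the vertex set into a ball $\{v:d(v,v^*)<r_0\}$, finite by local finiteness, and its complement, choosing the radius $r_0$ at the very end.

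For the finitely many \emph{near} vertices $v\notin\ME(\xi_1)$, the gap $\delta_v:=M(v)-M(v^*)$ is strictly positive. Writing $Y_j^{(v)}:=d^2(v,\xi_j)-d^2(v^*,\xi_j)$, one has $M_n(v)-M_n(v^*)=\frac1n\sum_j Y_j^{(v)}$ with $\ME Y_1^{(v)}=\delta_v$, so $\{M_n(v)\le M_n(v^*)\}$ is the event that this empirical mean falls below its expectation by $\delta_v$. The key point is that $Y_1^{(v)}$ has finite variance under the hypothesis alone: factoring $Y_1^{(v)}=(d(v,\xi_1)-d(v^*,\xi_1))(d(v,\xi_1)+d(v^*,\xi_1))$ and using $|d(v,\xi_1)-d(v^*,\xi_1)|\le d(v,v^*)$ gives $(Y_1^{(v)})^2\le d(v,v^*)^2(d(v,\xi_1)+d(v^*,\xi_1))^2$, whose expectation is finite because $M$ is totally defined (only second moments of the distance are needed, not fourth). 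The classical Chebyshev inequality then yields $\BP(M_n(v)\le M_n(v^*))\le \mathrm{Var}(Y_1^{(v)})/(n\delta_v^2)$, and a union bound over the finite near set contributes a term $C_{\mathrm{near}}/n$.

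The main obstacle is the complement: there are infinitely many (indeed possibly exponentially many) far vertices, so a per-vertex union bound diverges—for far $v$ both $\mathrm{Var}(Y_1^{(v)})$ and $\delta_v^2$ scale like $d(v,v^*)^4$, so the per-vertex estimate stays of order $1/n$ with no decay in the distance and summing it is hopeless. The device I would use is a single uniform geometric bound that collapses all far vertices into one real-valued deviation. For every vertex $v$ with $d(v,v^*)=\rho$ and every sample point, the reverse triangle inequality gives $d(v,\xi_j)\ge|\rho-d(v^*,\xi_j)|$, hence $M_n(v)\ge\frac1n\sum_j(\rho-d(v^*,\xi_j))^2=\rho^2-2\rho\bar A_n+M_n(v^*)$, where $\bar A_n:=\frac1n\sum_j d(v^*,\xi_j)$. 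Thus $M_n(v)-M_n(v^*)\ge\rho(\rho-2\bar A_n)$, which is strictly positive for all $\rho\ge r_0$ as soon as $\bar A_n<r_0/2$; consequently $\{\exists\,\text{far }v\in\MS_n\}\subseteq\{\bar A_n\ge r_0/2\}$.

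Finally I would close the argument by bounding this last event. Since $\bar A_n$ is an average of the i.i.d.\ nonnegative variables $d(v^*,\xi_j)$, whose mean $A:=\ME\,d(v^*,\xi_1)$ and variance are finite (the variance is at most $M(v^*)<\infty$, again only a second-moment requirement), choosing $r_0>2A$ and applying classical Chebyshev gives $\BP(\bar A_n\ge r_0/2)\le M(v^*)/\big(n(r_0/2-A)^2\big)=C_{\mathrm{far}}/n$. Adding the near and far contributions produces the desired constant $C=C(\Gamma,\xi_1)$. The delicate points to get right are the finiteness of $\mathrm{Var}(Y_1^{(v)})$ and of $\mathrm{Var}(d(v^*,\xi_1))$ from total-definedness alone, and the choice of $r_0$ large enough both to exceed $2A$ and to contain the finite set $\ME(\xi_1)$, so that the near/far split is legitimate.
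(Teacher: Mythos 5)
Your argument is correct, but there is nothing in this paper to compare it against: Theorem~\ref{th:chebyshev} is not proved here at all. It is quoted, with citation, from \cite{MosUsh:SLLN1} and is then used as a black box in the proofs of the mean-set attack principles (Theorems A and B). So what you have written is a self-contained reconstruction of the cited result, not an alternative to a proof appearing in this paper.

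On its own merits the proof goes through, and the two places where such an argument typically fails are exactly the ones you treat carefully. First, the variance of $Y_1^{(v)}=d^2(v,\xi_1)-d^2(v^*,\xi_1)$: a naive bound would require fourth moments of the distance, which are not part of the hypothesis; your factorization together with $|d(v,\xi_1)-d(v^*,\xi_1)|\le d(v,v^*)$ gives $\ME\rb{Y_1^{(v)}}^2\le 2\,d^2(v,v^*)\rb{M(v)+M(v^*)}$, which is finite precisely because the weight function is totally defined. Second, the infinitely many far vertices: as you note, the per-vertex Chebyshev estimate does not decay in $d(v,v^*)$, so a union bound over all of $V(\Gamma)$ is hopeless; the reverse-triangle bound $M_n(v)-M_n(v^*)\ge\rho\rb{\rho-2\bar A_n}$ collapses the entire far region into the single real-valued event $\sqb{\bar A_n\ge r_0/2}$, and classical Chebyshev applies since $A=\ME\, d(v^*,\xi_1)\le\sqrt{M(v^*)}<\infty$ and $\mathrm{Var}\rb{d(v^*,\xi_1)}\le M(v^*)$. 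Both near and far contributions are $O(1/n)$ with constants depending only on $\Gamma$ and the law of $\xi_1$, as required. One cosmetic remark: insisting that the near ball contain $\ME(\xi_1)$ is unnecessary --- your far-vertex estimate shows that with probability at least $1-C_{\mathrm{far}}/n$ no far vertex lies in $\MS_n$ at all, which is more than enough whether or not some mean-set points happen to be far --- but this extra requirement is harmless.
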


With an additional assumption on $\mu$, we can get even Chernoff-like asymptotic
bound.

\begin{theorem}[Chernoff-like bound for graphs, \cite{MosUsh:SLLN1}]\label{th:Hoeffding}
Let $\Gamma$ be a locally-finite connected graph and
$\{\xi_i\}_{i=1}^\infty$ a sequence of i.i.d. random $\Gamma$-elements.
If the weight function $M_{\xi_1}(\cdot)$ is totally defined
and $\mu_{\xi_1}$ has finite support, then for some constant $C>0$
\begin{equation}\label{eq:Hoeffding2}
    \BP\rbb{ \MS(\xi_1,\ldots, \xi_n) \not\subseteq \ME(\xi_1) } \le O(e^{-Cn}).
\end{equation}
\end{theorem}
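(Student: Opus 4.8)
The plan is to use the finite-support hypothesis in two ways: first to trap all relevant minimizers—both the true mean-set $\ME(\xi_1)$ and every realization of the sample mean-set $\MS_n$—inside a single fixed finite set of vertices, and then, having reduced to finitely many candidate vertices, to apply the classical Hoeffding inequality for bounded i.i.d. summands and a union bound.

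First I would localize the problem. Let $S$ denote the (finite) support of $\mu=\mu_{\xi_1}$, and fix a reference vertex $v_0\in\ME(\xi_1)$, which exists by Lemma~\ref{le:E_finite}. Since every sample point lies in $S$, the empirical measure $\mu_n$ is supported on $S$ with probability one, so $M_n(v_0)=\sum_{i\in S}d^2(v_0,i)\mu_n(i)\le B^2$ holds deterministically, where $B=\max_{i\in S}d(v_0,i)<\infty$. For an arbitrary vertex $v$ the same computation gives $M_n(v)\ge d(v,S)^2$. Hence any minimizer of $M_n$ must lie in the set $W=\sqb{v\in V(\Gamma)\mid d(v,S)\le B}$, and the identical estimate applied to $M(\cdot)$ shows $\ME(\xi_1)\subseteq W$ as well. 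Because $\Gamma$ is locally finite and $S$ is finite, $W$ is a finite set. Thus $\MS_n\subseteq W$ with probability one, and the event $\MS_n\not\subseteq\ME(\xi_1)$ forces some $v\in W\setminus\ME(\xi_1)$ to satisfy $v\in\MS_n$, and in particular $M_n(v)\le M_n(v_0)$.

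Next I would carry out the large-deviation estimate on this finite set. Fix $v\in W\setminus\ME(\xi_1)$ and set $Z_k=d^2(v,\xi_k)-d^2(v_0,\xi_k)$, so that $M_n(v)-M_n(v_0)=\frac1n\sum_{k=1}^n Z_k$ and $\ME Z_1=M(v)-M(v_0)=:\delta_v>0$, the strict inequality holding since $v\notin\ME(\xi_1)$. Because $\xi_k\in S$ with $S$ finite, each $Z_k$ is bounded by a constant depending only on $v,v_0,S$. The event $M_n(v)\le M_n(v_0)$ is exactly $\frac1n\sum_k Z_k\le 0$, a downward deviation of the empirical mean from its expectation $\delta_v$ by at least $\delta_v$, so Hoeffding's inequality yields $\BP\rb{v\in\MS_n}\le e^{-C_v n}$ for some $C_v>0$. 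A union bound over the finite set $W\setminus\ME(\xi_1)$ then gives $\BP\rbb{\MS_n\not\subseteq\ME(\xi_1)}\le\modb{W}\,e^{-Cn}$ with $C=\min_v C_v>0$, which is $O(e^{-Cn})$.

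The main obstacle is the localization step, and it is precisely where finite support is indispensable. Finite support is what makes $M_n(v_0)$ bounded deterministically, so that the minimizers collapse into a fixed finite ball $W$ independent of the sample, and it is also what makes each increment $Z_k$ a bounded random variable, so that Hoeffding applies rather than merely Chebyshev; without boundedness one could control only the variance and would recover the weaker $O(1/n)$ rate of Theorem~\ref{th:chebyshev}. The remaining care is routine: one checks that $B$, the gap $\delta=\min_{v\in W\setminus\ME(\xi_1)}\delta_v$, and each $C_v$ are genuinely finite and positive, which follows from the finiteness of $S$ and of $W\setminus\ME(\xi_1)$.
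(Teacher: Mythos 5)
Your proof is correct, but note that this paper never proves Theorem~\ref{th:Hoeffding}: it is quoted verbatim from the companion work \cite{MosUsh:SLLN1}, so there is no in-paper proof to compare against. Your argument — using finite support to bound $M_n(v_0)$ deterministically, trapping all minimizers of $M_n$ and of $M$ in a fixed finite set $W$ (finite by local finiteness of $\Gamma$), then applying Hoeffding to the bounded increments $d^2(v,\xi_k)-d^2(v_0,\xi_k)$ with strictly positive mean gap $\delta_v$ and finishing with a union bound over $W\setminus\ME(\xi_1)$ — is sound at every step and is essentially the localization-plus-concentration scheme used in the cited source, with the finite-support hypothesis doing exactly the two jobs you identify (deterministic localization and boundedness of the summands).
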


\section{Effective computation of a mean-set}
\label{se:mean_set_compute}

Let $G$ be a group and $\{\xi_i\}_{i=1}^n$ a sequence of random i.i.d. elements taking values in $G$
such that the corresponding weight function $M(\cdot)$ is totally defined.
In Section \ref{se:SLLN}, we introduced a notion of the mean-set of $\xi$ that satisfies
the desirable properties (AV1) and (AV2) of Section \ref{se:idea}.
One of the technical difficulties encountered in practice is that,
unlike the classical average value $(x_1+\ldots+x_n)/n$
for real-valued random variables, the sample mean-set $\MS_n$ is hard to compute.
In other words, in general, our definition of the meat-set might not satisfy the
property (AV3).

Several problems arise when trying to compute $\MS_n$:
\begin{itemize}
    \item
Straightforward computation of the set $\{M(g) \mid g\in G\}$ requires at least $O(|G|^2)$
steps. This is computationally infeasible for large groups $G$, and
impossible for infinite groups. Hence  we might want to
reduce the search of a minimum to some small part of $G$.
    \item
There exist infinite groups in which the distance function $d(\cdot,\cdot)$
is very difficult to compute. The braid group $B_\infty$ is an example for such a group.
The computation of the distance function for $B_\infty$
is known to be NP-hard, see \cite{PRaz}. Such groups require special
treatment.

\noindent Moreover, there exist infinite groups for which the distance
function $d(\cdot,\cdot)$ is not computable. We omit consideration of such
groups.
\end{itemize}
We devise a heuristic procedure to solve the first problem.
As proved in \cite{MosUsh:SLLN1}, if the weight function $M(\cdot)$ satisfies certain
local monotonicity properties, then our procedure achieves the desired result.
Our algorithm is a simple direct descent heuristic, in which we
use the sample weight function $M_n$ that comes from
a sample of random group elements $\{g_1,\ldots,g_n\}$ from a finitely-generated group $G$.

\begin{algorithm}[Direct Descent Heuristic] \label{al:direct_descend}\
\\{\sc Input:} A group $G$ with a finite set of generators $X\subseteq G$ and
a sequence of elements $\{g_1,\ldots,g_n\}$ in $G$.
\\{\sc Output:} An element $g \in G$ that locally minimizes $M_n(\cdot)$.
\\{\sc Computations:}
\begin{itemize}
    \item[A.]
Choose a random $g \in G$ according to some probability measure $\nu$ on $G$.
    \item[B.]
If for every $x\in X^{\pm 1}$, $M_n(g) \le M_n(gx)$, then output $g$.
    \item[C.]
Otherwise put $g \leftarrow gx$, where $x\in X^{\pm 1}$ is an element
minimizing the value of $M_n(gx)$ and go to step B.
\end{itemize}
\end{algorithm}

As any other direct descend heuristic method, Algorithm \ref{al:direct_descend}
might not work if the function $M_n$ has local minima. It is proved in \cite{MosUsh:SLLN1}
that it always works for trees and, hence, for free groups.

\begin{theorem}[\cite{MosUsh:SLLN1}] \label{thm:Algorithm_works_trees}
Let $\mu$ be a distribution on a locally-finite tree $T$ such that a
function $M$ is totally defined. Then Algorithm
\ref{al:direct_descend} for $T$ and $M$ finds a central point
(mean-set) of $\mu$ on $T$.
\end{theorem}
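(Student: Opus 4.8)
The plan is to show that on a tree the weight function $M$ has no spurious local minima: every vertex that is a local minimum with respect to the edge relation is automatically a global minimum. Once this is established, the greedy descent of Algorithm \ref{al:direct_descend} (run with $M$ in place of $M_n$) cannot get stuck at a non-optimal point, and its stopping criterion in step B fires exactly at a vertex of $\ME(\mu)$.

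First I would establish a discrete strict-convexity estimate for $M$ along geodesics. Fix three consecutive vertices $a,b,c$ of a geodesic in $T$, so that $d(a,b)=d(b,c)=1$ and $d(a,c)=2$, and I claim the pointwise bound
$$d^2(a,s)+d^2(c,s)-2\,d^2(b,s)\ge 2 \qquad \text{for every } s\in V(T).$$
This is where the tree hypothesis does all the work: because geodesics in a tree are unique, the path from $s$ attaches to the segment $a$--$b$--$c$ at a single vertex $p\in\{a,b,c\}$, and, writing $t=d(s,p)$, a short case analysis (the case $p=b$ gives $4t+2$, while $p=a$ and $p=c$ each give exactly $2$) yields the inequality. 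Integrating against $\mu$ and using $\sum_s\mu(s)=1$ then produces the uniform convexity estimate
$$M(a)+M(c)-2M(b)\ge 2.$$

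Next I would propagate this along an arbitrary geodesic $w_0,w_1,\ldots,w_m$: setting $\delta_i=M(w_{i+1})-M(w_i)$, the estimate reads $\delta_i\ge\delta_{i-1}+2$, so the first differences are strictly increasing. The main lemma—local minimum implies global minimum—then follows by contradiction. If a local minimum $v$ were not global, choose $u$ with $M(u)<M(v)$ and take the geodesic $v=w_0,\ldots,w_m=u$; then $\sum_{i}\delta_i=M(u)-M(v)<0$, while strict monotonicity forces $\delta_0=\min_i\delta_i$, so $\delta_0<0$, i.e.\ $M(w_1)<M(v)$ for the neighbor $w_1$ of $v$, contradicting local minimality.

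Finally I would assemble the termination argument. At any non-minimal vertex some neighbor strictly decreases $M$, so step C always makes genuine progress and $M$ strictly decreases at each iteration; in particular no vertex is visited twice. Telescoping the convexity estimate from a central point $c\in\ME(\mu)$ (whose existence is guaranteed by Lemma \ref{le:E_finite}) gives $M(v)\ge M(c)+d(c,v)\bigl(d(c,v)-1\bigr)$, so every sublevel set of $M$ is finite by local finiteness of $T$; since the descent stays inside the sublevel set fixed by its starting point, it halts after finitely many steps at a vertex of $\ME(\mu)$. The main obstacle is the convexity estimate of the first step—once the uniform gap of $2$ is in hand, the global-from-local principle and termination are essentially bookkeeping.
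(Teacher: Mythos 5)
Your proof is correct and complete. Note that this paper does not actually contain a proof of the statement: the theorem is imported from \cite{MosUsh:SLLN1}, and the only indication given here (Section \ref{se:mean_set_compute}) is that the argument rests on ``certain local monotonicity properties'' of the weight function. Your discrete convexity estimate --- the pointwise bound $d^2(a,s)+d^2(c,s)-2d^2(b,s)\ge 2$ for consecutive vertices of a geodesic (verified by the attachment-point case analysis, which is right: $4t+2$ when the path from $s$ meets at $b$, exactly $2$ when it meets at $a$ or $c$), integrated against $\mu$ to give $M(a)+M(c)-2M(b)\ge 2$ --- is precisely such a local monotonicity property, so you have in effect reconstructed the missing argument rather than diverged from it. The two consequences you draw from it are both sound: local minima of $M$ are global (increments along a geodesic are strictly increasing, so a profitable endpoint forces a profitable first step), and the telescoped bound $M(v)\ge M(c)+d(c,v)\bigl(d(c,v)-1\bigr)$ combined with local finiteness makes sublevel sets finite, so the strictly decreasing descent of Algorithm \ref{al:direct_descend} must halt at a point of $\ME(\mu)$. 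You were also right to flag, and correctly handle, the fact that the theorem as stated runs the algorithm on $M$ itself rather than on the sample weight $M_n$.
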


The second problem of computing $\MS_n$ concerns practical
computations of length function in $G$.
It turns out that we need a relatively mild assumption to deal with it
-- the existence of an efficiently computable distance
function $d_X(\cdot,\cdot)$; even a ``reasonable'' approximation of the length function
may work.
In this work we approximate geodesic length using the method
described in \cite{MSU_CRYPTO}. Even though it does not guarantee the
optimal result, it was proved to be practically useful in a series of
attacks, see \cite{MSU_PKC,MU_AAGL,MU3,Longrigg_Ushakov2007}.

\section{The mean-set attack}
\label{se:attack}

In this section, we use theoretical results stated above
to attack the Sibert et al. protocol, described in Section \ref{se:protocol}.
In the following heuristic attack we use the Algorithm \ref{al:direct_descend}
to compute sample mean-set $\MS_n$.

\begin{algorithm}{\bf (The mean-set attack)} \label{al:attack}
\\{\sc Input:} The Prover's public element $(t,w)$ and sequences $R_0$ and $R_1$ as in the protocol.
\\{\sc Output:} An element $z$ satisfying the equality $t = z^{-1} w z$ (which can be considered as the Prover's
private key), or $Failure$.
\\{\sc Computations:}
\begin{itemize}
    \item[A.]
Apply Algorithm \ref{al:direct_descend} to $R_0$ and obtain $g_0$.
    \item[B.]
Apply Algorithm \ref{al:direct_descend} to $R_1$ and obtain $g_1$.
    \item[C.]
If $g_1 g_0^{-1}$ satisfies $t = (g_1 g_0^{-1})^{-1} w (g_1 g_0^{-1})$ then output $g_1 g_0^{-1}$. Otherwise output $Failure$.
\end{itemize}
\end{algorithm}

If the algorithm outputs an element $z \in G$, then $z$ can serve
as the Prover's original secret $s$;
any solution of the conjugacy equation $t = x^{-1} w x$ does.
In general, $z$ can be different from $s$, and there are no means for
the adversary to determine whether $z = s$.
In spite of that, Eve, who is only
trying to authenticate as the Prover,
considers this $z$ a success.
On the other hand, since our goal is to show that the protocol is not
computationally zero-knowledge,
we estimate the probability to find $s$. Only this original secret element
$s$ is considered as a success in our analysis.
Other outcomes that work for Eve (when $z\ne s$) are ignored.

The theorems below give asymptotic bounds on the failure rate
(for the original $s$) in the mean-set attack.
We show that the probability of the failure can decrease
linearly or exponentially, depending on the distribution $\mu$.

\begin{theorem}[Mean-set attack principle -- I]
\label{th:mean_attack}
Let $G$ be a group, $X$ a finite generating set for $G$, $s\in G$ a secret fixed element,
and $\xi_1, \xi_2, \ldots$ a sequence of randomly generated i.i.d.
group elements,
such that $\ME(\xi_1) = \{g\}$. If $\xi_1,\ldots,\xi_n$ is a sample of random elements of $G$
generated by the Prover,
$c_1,\ldots,c_n$ a succession of random bits (challenges) generated
by the Verifier,
and
$$y_i =
    \begin{cases}
    r_i & \mbox{if } c_i=0; \\
    s r_i & \mbox{if } c_i=1\\
    \end{cases}
    $$
    random elements representing responses of the Prover,
then there exists a constant $D$ such that
    $$\BP \rb{s\not\in\MS \rbb{\{y_i \mid c_i=1, i=1,\ldots,n\}} \cdot \MS\rbb{\{y_i \mid c_i=0, i=1,\ldots,n\}}^{-1}} \le \frac{D}{n}.$$
\end{theorem}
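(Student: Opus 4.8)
The plan is to reduce the statement to the graph-theoretic Chebyshev inequality of Theorem \ref{th:chebyshev}, applied separately to the two subsamples, and then to average over the random challenge sequence. Write $R_0 = \{y_i \mid c_i = 0\}$ and $R_1 = \{y_i \mid c_i = 1\}$, and set $n_0 = |R_0|$, $n_1 = |R_1|$, so $n_0 + n_1 = n$. First I would observe that the elements of $R_0$ are exactly the realizations $r_i$ with $c_i = 0$, an i.i.d. sample from $\mu$ whose mean-set is $\ME(\xi_1) = \{g\}$, while the elements of $R_1$ are the shifted realizations $s r_i$ with $c_i = 1$. By the shift property (Proposition \ref{pr:g_shift}) these form an i.i.d. sample from the pushforward measure $s\mu$, whose mean-set is $s\{g\} = \{sg\}$. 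Since left multiplication by $s$ is a label-preserving automorphism of the Cayley graph (it is exactly the isometry $d_X(g_1,g_2) = d_X(sg_1,sg_2)$ noted after Proposition \ref{pr:g_shift}), the weight function of $s\mu$ is the $s$-translate of that of $\mu$; hence $s\mu$ is again totally defined and the Chebyshev constant is unchanged, $C(\Gamma, s\xi_1) = C(\Gamma, \xi_1) =: C$.

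The key deterministic observation is that $\{g\}$ and $\{sg\}$ are singletons and that the sample mean-sets are non-empty (the sampling weight tends to infinity away from the finite support of the sample, so a minimizer exists). Consequently $\MS(R_0) \subseteq \{g\}$ forces $\MS(R_0) = \{g\}$ and $\MS(R_1) \subseteq \{sg\}$ forces $\MS(R_1) = \{sg\}$, in which case
$$\MS(R_1) \cdot \MS(R_0)^{-1} = \{sg\}\cdot\{g\}^{-1} = \{s\}.$$
Therefore the failure event $\{s \notin \MS(R_1)\cdot\MS(R_0)^{-1}\}$ is contained in $\{\MS(R_0) \not\subseteq \{g\}\} \cup \{\MS(R_1) \not\subseteq \{sg\}\}$, and a union bound reduces everything to estimating the two probabilities on the right via Theorem \ref{th:chebyshev}.

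To apply Chebyshev I would condition on the challenge string $c_1, \ldots, c_n$. Given the challenges, the sizes $n_0, n_1$ are fixed and, because the nonces are generated independently of the challenges, $R_0$ and $R_1$ are genuine i.i.d. samples of sizes $n_0$ and $n_1$ from $\mu$ and $s\mu$. Theorem \ref{th:chebyshev} then yields, on the event $n_0, n_1 \ge 1$,
$$\BP\rbb{s \notin \MS(R_1)\cdot\MS(R_0)^{-1} \,\Big|\, c_1,\ldots,c_n} \le \frac{C}{n_0} + \frac{C}{n_1}.$$
It remains to average over the challenges, and this is where the only real work lies: the random sizes can be small. Writing $p = \BP(c_i = 1) \in (0,1)$, the count $n_1$ is Binomial$(n,p)$ and $n_0 = n - n_1$. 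I would bound $\ME[\IND_{n_1 \ge 1}/n_1]$ using the elementary inequality $1/N \le 2/(N+1)$ for $N \ge 1$ together with the classical identity $\ME[1/(N+1)] = (1-(1-p)^{n+1})/((n+1)p) \le 1/((n+1)p)$, which gives $\ME[\IND_{n_1\ge1}/n_1] \le 2/((n+1)p)$, and symmetrically $\ME[\IND_{n_0\ge1}/n_0] \le 2/((n+1)(1-p))$. The boundary cases $n_0 = 0$ or $n_1 = 0$ contribute at most $\BP(n_0 = 0) + \BP(n_1 = 0) = p^n + (1-p)^n$, which is exponentially small and hence $O(1/n)$. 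Collecting these terms produces a constant $D = D(G,\mu)$ (depending on $C$ and on $p$) with the claimed bound $D/n$.

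The main obstacle is exactly this last averaging step: the conditional bound blows up for small subsamples, so the argument hinges on controlling the negative moments $\ME[1/n_0]$ and $\ME[1/n_1]$ of the binomial split and on disposing of the empty-subsample cases. Everything else — the shift property, the singleton reduction, and the union bound — is routine once the problem has been conditioned on the challenge sequence.
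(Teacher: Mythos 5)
Your proposal is correct, and its skeleton coincides with the paper's proof: both reduce the failure event, via the shift property and the singleton hypothesis $\ME(\xi_1)=\{g\}$, to the union $\{\MS(R_0)\ne\{g\}\}\cup\{\MS(R_1)\ne\{sg\}\}$, and both then apply the graph-theoretic Chebyshev inequality (Theorem \ref{th:chebyshev}) to each subsample conditionally on the challenge string. The only genuine difference is the final averaging step, i.e., how the random subsample sizes $n_0,n_1$ are controlled. The paper splits on the event $\{n_0\ge n/4\}$: classical Chebyshev applied to the Bernoulli challenges gives $\BP\rb{n_0<n/4}<4/n$, and on the complementary good event the conditional bound is at most $C/(n/4)=4C/n$, yielding $(4+4C)/n$ per subsample and $(8+8C)/n$ in total. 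You instead integrate the conditional bound directly, estimating the negative moment $\ME\left[\IND_{n_1\ge 1}/n_1\right]$ of the binomial count via $1/N\le 2/(N+1)$ and the identity $\ME\left[1/(n_1+1)\right]=\rb{1-(1-p)^{n+1}}/\rb{(n+1)p}$, disposing of the empty-subsample events by their exponentially small probability. Your route is slightly sharper, avoids the arbitrary threshold $n/4$, and works verbatim for biased challenges $p\ne 1/2$, at the cost of invoking the binomial negative-moment identity; the paper's route uses nothing beyond Chebyshev for real-valued random variables but is tied to fair challenges. Both arguments are valid, give a constant $D=O(C)$, and achieve the same $O(1/n)$ rate; your explicit remarks on the non-emptiness of sample mean-sets and on the invariance of the Chebyshev constant under the isometry $g\mapsto sg$ make precise two points the paper leaves implicit.
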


\begin{proof}
It follows from Theorem \ref{th:chebyshev} that there exists a constant $C$
such that
    $$\BP( \MS(\{y_i \mid c_i=0,i=1,\ldots,n\}) \ne \{g\} ) \le \frac{C}{|\{i\mid c_i=0,i=1,\ldots,n\}|}.$$
Applying Chebyshev's inequality to Bernoulli random variables $\{c_i\}$
having $\ME(c_i)= \frac{1}{2}$ and $\sigma_{c_i}^2=\frac{1}{4}$,
we obtain
    $$\BP\rb{ |\{i\mid c_i=0,i=1,\ldots,n\}|<\frac{n}{4} } < \frac{4}{n}.$$

    In more detail, if number of zeros in our sample of challenges is less than $\frac{n}{4}$, then
the number of ones is greater or equal to $\frac{3n}{4}$, and we have
$$\BP\rbb{ \modbb{\{i\mid c_i=0, i=1,\ldots,n\}} < \frac{n}{4} } < \BP\rb{\modb{ \sum_{i=1}^n c_i - \frac{n}{2}} \geq \frac{n}{4}}.$$
Note that
$$\modb{ \sum_{i=1}^n c_i - \frac{n}{2}}\geq \frac{n}{4} \Leftrightarrow \modb{ \frac{\sum_{i=1}^n c_i}{n} - \frac{1}{2}}
\geq \frac{1}{4} $$
and
$$\BP \rb{ \modb{ \frac{\sum_{i=1}^n c_i}{n} - \frac{1}{2}}
\geq \frac{1}{4}} \leq \frac{4}{n}$$
from the classical Chebyshev inequality for sample means with $\varepsilon = \frac{1}{4}$.

It follows that
    $$\BP( \MS(\{y_i \mid c_i=0,i=1,\ldots,n\}) \ne \{g\} ) \le \frac{4}{n} + \frac{4C}{n} \le \frac{4+4C}{n}.$$
Similarly, we prove that
    $\BP( \MS(\{y_i \mid c_i=1,i=1,\ldots,n\}) \ne \{s g\} ) \le \frac{4+4C}{n}.$
Hence,
    $$\BP(s\not\in\MS(\{y_i \mid c_i=1,i=1,\ldots,n\}) \cdot \MS(\{y_i \mid c_i=0,i=1,\ldots,n\})^{-1}) \le \frac{8+8C}{n}.$$
\end{proof}

Furthermore, we can get Chernoff-like asymptotic bound if we impose one restriction
on distribution $\mu$. Recall the original
Hoeffding's inequality (\cite{Hoeffding:1963}) well-known in probability theory.
Assume that $\{x_i\}$ is a sequence of independent random variables and
that every $x_i$ is almost surely bounded, i.e.,
$\BP(x_i-\ME x_i \in[a_i,b_i]) = 1$ for some $a_i,b_i\in\MR$. Then for the
sum $S_n = x_1+\ldots+x_n$, the inequality
    $$\BP(S_n-\ME S_n \ge n\varepsilon) \le \exp \rb{-\frac{2n^2\varepsilon^2}{\sum_{i=1}^n (b_i-a_i)^2}}$$
holds. If $x_i$ are identically distributed, then we get the inequality
\begin{equation}\label{eq:Hoeffding}
    \BP\rb{\frac{1}{n}(x_1+\ldots+x_n) - \ME x_1 \ge \varepsilon} \le 2 \exp\rb{-\frac{2\varepsilon^2}{(b-a)^2}n}.
\end{equation}

Now we can prove the Mean-set attack principle with exponential bounds.
\begin{theorem}[Mean-set attack principle -- II]
Let $G$ be a group, $X$ a finite generating set for $G$, $s\in G$ a secret fixed element,
and $\xi_1, \xi_2, \ldots$ a sequence of randomly generated i.i.d.
group elements,
such that $\ME(\xi_1) = \{g\}$. If $\xi_1,\ldots,\xi_n$ is a sample of random elements of $G$
generated by the Prover,
$c_1,\ldots,c_n$ a succession of random bits (challenges) generated
by the Verifier,
$$y_i =
    \begin{cases}
    r_i & \mbox{if } c_i=0; \\
    s r_i & \mbox{if } c_i=1\\
    \end{cases}
    $$
    random elements representing responses of the Prover,
    and the distribution $\mu$ has finite support,
     then there exists a constant $D=D(G, \mu)$ such that
    $$\BP \rb{s\not\in\MS \rbb{\{y_i \mid c_i=1, i=1,\ldots,n\}} \cdot \MS\rbb{\{y_i \mid c_i=0, i=1,\ldots,n\}}^{-1}} \le O(e^{-Dn}).$$
\end{theorem}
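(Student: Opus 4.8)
The plan is to mirror the proof of Theorem~\ref{th:mean_attack} step for step, replacing each of the two polynomial (Chebyshev-type) bounds by its exponential (Chernoff/Hoeffding-type) counterpart, which becomes available precisely because we now assume that $\mu$ has finite support. First I would isolate the two sources of failure. Writing $N_0 = \modb{\{i \mid c_i = 0, i=1,\ldots,n\}}$ for the (random) number of zero-challenges, I would condition on $N_0 = m$: since the challenges $c_i$ are generated independently of the nonces $r_i$, the responses collected on the event $\{N_0 = m\}$ form an i.i.d.\ sample of size $m$ drawn from $\mu$, so Theorem~\ref{th:Hoeffding} applies and yields $\BP\rb{\MS(\{y_i \mid c_i=0\}) \ne \{g\} \mid N_0 = m} \le O(e^{-Cm})$ for some constant $C>0$.

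Next I would control the randomness of $N_0$ itself. Applying the classical Hoeffding inequality \eqref{eq:Hoeffding} to the Bernoulli challenges $c_i$ (each bounded in $[0,1]$, so $b-a=1$, with $\ME(c_1)=\tfrac12$) and taking $\varepsilon=\tfrac14$ gives $\BP(N_0 < n/4) \le O(e^{-C'n})$ for some $C'>0$. Splitting the conditioning sum over $m$ at the threshold $n/4$ and using that $e^{-Cm}$ is decreasing then produces
\begin{equation*}
    \BP\rbb{\MS(\{y_i \mid c_i=0\}) \ne \{g\}} \le \BP(N_0 < n/4) + O(e^{-Cn/4}) \le O(e^{-C'n}) + O(e^{-Cn/4}),
\end{equation*}
which is exponentially small. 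The same argument, now invoking the shift property (Proposition~\ref{pr:g_shift}) to identify the theoretical mean-set of the $c_i=1$ responses $s r_i = s\xi_i$ as $\ME(s\xi_1) = s\ME(\xi_1) = \{sg\}$, shows that $\BP\rb{\MS(\{y_i \mid c_i=1\}) \ne \{sg\}} \le O(e^{-Dn})$ for a suitable $D>0$.

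Finally I would assemble the pieces. On the complement of both bad events we have $\MS(\{y_i \mid c_i=1\}) \cdot \MS(\{y_i \mid c_i=0\})^{-1} = \{sg\}\cdot\{g\}^{-1} = \{s\}$, so the failure event $\{s \notin \MS(\{y_i \mid c_i=1\})\cdot\MS(\{y_i \mid c_i=0\})^{-1}\}$ is contained in the union of the two bad mean-set events; a union bound and a relabelling of the constant then produce the claimed $O(e^{-Dn})$ bound.

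The step I expect to be the main (and essentially the only genuine) obstacle is the conditioning in the first paragraph: the two branch sample sizes are themselves random, so neither Theorem~\ref{th:Hoeffding} nor the classical Hoeffding bound can be applied to a fixed-size sample directly. One must condition on $N_0 = m$, use independence of the challenges from the nonces to recover a bona fide i.i.d.\ sample of size $m$, apply the graph Chernoff bound there, and only afterwards average out the distribution of $N_0$ via the classical bound. Everything else is the routine bookkeeping of combining two exponentially small quantities through a union bound.
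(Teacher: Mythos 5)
Your proposal follows essentially the same route as the paper's proof: apply Theorem \ref{th:Hoeffding} to each challenge branch, control the branch sizes via the classical Hoeffding bound \eqref{eq:Hoeffding} on the Bernoulli challenges with threshold $n/4$, and finish with a union bound yielding a constant of the form $D = \min\{1/8, C/4\}$. The one difference is that you make explicit the conditioning on $N_0 = m$ needed to apply the fixed-sample-size bound to a random-size sample --- a step the paper handles informally by writing the random quantity $\modb{\{i \mid c_i = 0\}}$ directly in the exponent --- so your write-up is, if anything, the more careful of the two.
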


\begin{proof}
It follows from Theorem \ref{th:Hoeffding} that there exists a constant $C$
such that
    $$\BP( \MS(\{y_i \mid c_i=0,i=1,\ldots,n\}) \ne \{g\} ) \le O(e^{-C|\{i\mid c_i=0,i=1,\ldots,n\}|}).$$
Applying inequality (\ref{eq:Hoeffding}) to Bernoulli random variables $\{c_i\}$, we get
    $$\BP\rb{ \sum_{i=1}^n c_i -\frac{1}{2}>\frac{1}{4} } < e^{-n/8}.$$
Thus, we obtain a bound
    $$\BP( \MS(\{y_i \mid c_i=0,i=1,\ldots,n\}) \ne \{g\} ) \le e^{-n/8} + O(e^{-Cn/4}).$$
Similarly, we prove that
    $\BP( \MS(\{y_i \mid c_i=1,i=1,\ldots,n\}) \ne \{s g\} ) \le e^{-n/8} + O(e^{-Cn/4}).$
Hence,
    $$\BP(s\not\in\MS(\{y_i \mid c_i=1,i=1,\ldots,n\}) \cdot \MS(\{y_i \mid c_i=0,i=1,\ldots,n\})^{-1}) \le O(e^{-Dn})$$
where $D = \min\{1/8,C/4\}$.
\end{proof}

Algorithm \ref{al:attack} can fail. Nevertheless
the pair of the obtained elements $g_0,g_1$ often encodes some additional information about
the secret $s$.
Indeed, assume that $\ME\mu = \{g\}$. The element $g_0$ obtained at step A of
Algorithm \ref{al:attack} can be viewed as a product
$g e_0$ for some $e_0\in G$. Similarly, the element $g_1$ can be viewed as a product
$sg e_1$ for some $e_1 \in G$.
Hence Algorithm \ref{al:attack} outputs the secret element $s$ whenever
$g_1g_0^{-1} = s g e_1 e_0^{-1} g^{-1} = s$, i.e., whenever $e_1 e_0^{-1} = 1$.

Now, assume that Algorithm \ref{al:attack} has failed, i.e., $e_1 e_0^{-1} \ne 1$.
In this case, one can try to reconstruct the secret element $s$ as a product
    $$g_1 \cdot e \cdot g_0^{-1} = s g e_1 \cdot e \cdot e_0^{-1} g^{-1}$$
where $e$ is an unknown element of the platform group. Clearly, $e$ gives a correct
answer if and only if $e_1 \cdot e \cdot e_0^{-1} = 1$ or $e = e_1^{-1}e_0$.
The element
\begin{equation}\label{eq:error}
    e_1^{-1}e_0
\end{equation}
is called {\em the error of the method}. Clearly,
one only needs to enumerate all words $e$ of length
up to $|e_1^{-1}e_0|$ to reconstruct the required $s$ in the form $g_1 e g_0^{-1}$.
If a secret element $s$ is chosen uniformly as a word of length $l$
and $|e_1^{-1}e_0|<l$, then we gain some information about $s$, since the search space for $s$ reduces.
We can improve Algorithm \ref{al:attack} by adding such enumeration step as follows.

\begin{algorithm}{\bf (The attack--2)} \label{al:attack2}
\\{\sc Input:} The Prover's public element $(t,w)$. Sequences $R_0$ and $R_1$ as in the protocol.
The number $k\in\MN$ -- the expected length of error element $e_1e_0^{-1}$.
\\{\sc Output:} An element $z$ satisfying the equality $t = z^{-1} w z$
(which can be considered as the Prover's private key), or $Failure$.
\\{\sc Computations:}
\begin{itemize}
    \item[A.]
Apply Algorithm \ref{al:direct_descend} to $R_0$ and obtain $g_0$.
    \item[B.]
Apply Algorithm \ref{al:direct_descend} to $R_1$ and obtain $g_1$.
    \item[C.]
For every word $e$ of lengths up to $k$, check if $g_1 e g_0^{-1}$
satisfies the equality $t = (g_1 e g_0^{-1})^{-1} w (g_1 e g_0^{-1})$ and if so output $g_1 e g_0^{-1}$.
Otherwise output $Failure$.
\end{itemize}
\end{algorithm}

\section{Experiments}
\label{se:appl_SD}

To demonstrate the practical use of our mean-set attack, we perform a series
of experiments, which we describe below. In \cite{SDG}, \cite{Dehornoy_survey}
two different methods of generation of nonce elements were proposed, both
with the same platform group $B_n$, which
has the following (Artin's) presentation
\begin{displaymath}
B_n =
 \left\langle
\begin{array}{lcl}\s_1,\ldots,\s_{n-1} & \bigg{|} &
\begin{array}{ll}
 \s_i\s_j\s_i=\s_j\s_i\s_j & \textrm{if }|i-j|=1 \\
 \s_i\s_j=\s_j\s_i & \textrm{if }|i-j|>1
\end{array}
\end{array}
\right\rangle.
\end{displaymath}

We distinguish between the two ways, classical (\cite{Dehornoy_survey})
and alternative (\cite{SDG}),
to generate elements of the
underlying group by performing two different sets of experiments
outlined below in Sections \ref{se:classical_generation}
and \ref{se:special_key_generation}.
In both cases, we observe that the secret information
of the Prover is not secure, and the probability to break the protocol
grows as the number of rounds of the protocol increases.
All experiments are done using the CRAG software
package \cite{CRAG}.

\subsection{Classical key generation}
\label{se:classical_generation}
Classical key generation of the elements of $B_n$
was suggested in \cite{Dehornoy_survey}
with parameters $n=50$ (rank of the braid group) and the lengths of private keys $L = 512$.
The length function relative to the Artin generators
$\{\s_1,\ldots,\s_{n-1}\}$ is $NP$-hard.
That is why in this paper, as it was already mentioned in
Section \ref{se:mean_set_compute}, we use the approximation of geodesic length
method,
proposed in \cite{MSU_PKC}. See \cite{MSU_PKC,MU_AAGL,MU3,Longrigg_Ushakov2007} for a series of
successful attacks using this method.
We want to emphasize that we compute the sampling weight values in the
Algorithm \ref{al:direct_descend},
which is a subroutine in Algorithm \ref{al:attack},
using the approximated distance function values in $B_n$.

One of the disadvantages of the approximation algorithm that we used is that
there is no polynomial time upper bound for that as it uses Dehornoy
handle-free forms \cite{Dehornoy_fast}. As a result we do not know the complexity of our algorithm
and we do not know how our algorithm scales with parameter values.
In each experiment we randomly generate an instance
of the authentication protocol and try to break it, i.e., find the
private key, using the techniques developed in this paper. Recall
that each authentication is a series of $k$ $3$-pass
commitment-challenge-response rounds. Therefore, an instance of
authentication consists of $k$ triples $(x_i,c_i,r_i)$, $i=1,\ldots,k$ obtained as described
in Section \ref{se:protocol}. Here $x_i$ is a commitment, $c_i$ is a challenge,
and $r_i$ is a response.
 A random bit $c_i$ is
chosen randomly and uniformly from the set $\{0,1\}$. In our
experiments we make an assumption that exactly half of $c_i$'s are
$0$ and half are $1$. This allows us to see an instance of the protocol as a pair
of equinumerous sets $R_0 = \{r_1,\ldots, r_{k/2}\} \subset B_n$ and
$R_1 = \{sr_1',\ldots,sr_{k/2}'\} \subset B_n$.

The main parameters for the system are the rank $n$ of the braid group,
the number of rounds $k$ in the protocol, and the length $L$ of secret keys. We generate a single
instance of the problem with parameters $(n,k,L)$ as follows:
\begin{itemize}
    \item
A braid $s$ is chosen randomly and uniformly as a word of length $L$
over a group alphabet $\{\s_1,\ldots,\s_{n-1}\}$. This braid is a
secret element which is used only to generate further data and to
compare the final element to.
    \item
A sequence $R_0 = \{r_1,\ldots,r_{k/2}\}$ of braid words chosen randomly and
uniformly as words of length $L$ over a group alphabet
$\{\s_1,\ldots,\s_{n-1}\}$.
    \item
A sequence $R_1 = \{sr_1',\ldots,sr_{k/2}'\}$ of braid words, where $r_i'$
are chosen randomly and uniformly as words of length $L$ over a
group alphabet $\{\s_1,\ldots,\s_{n-1}\}$.
\end{itemize}
For every parameter set $(n,k,L)$ we generate $1000$ random instances
$(R_0,R_1)$ and run Algorithm \ref{al:attack} which
attempts to find the secret key $s$ used in the generation of $R_1$.

Below we present the results of actual experiments done for groups
$B_5$, $B_{10}$, and $B_{20}$. Horizontally we have increasing
number of rounds $k$ from $10$ to $320$ and vertically we have
increasing lengths $L$ from $10$ to $100$.
Every cell contains a pair $(P\%,E)$
where $P$ is a success rate and $E$ is an average length of the error (\ref{eq:error})
of the method for the corresponding pair $(L,k)$ of parameter values.
All experiments were performed using CRAG library \cite{CRAG}.
The library provides an environment to test cryptographic protocols constructed
from non-commutative groups, for example the braid group.

\begin{table}[ht]\small
  \centering
  \begin{tabular*}{1\textwidth}%
     {@{\extracolsep{\fill}}|c||c|c|c|c|c|c|}
 \hline
 {\bf L$\backslash$k} &{\bf 10} & {\bf 20} & {\bf 40} & {\bf 80} & {\bf 160} & {\bf 320} \\
 \hline
{\bf  10}& (19\%,~1.3)&  (72\%,~0.3) &  (97\%,~0.04)&  (100\%,~0)& (100\%,~0)& (100\%,~0)\\
 \hline
{\bf  50}& (2\%,~13.4)& (8\%,~9)& (68\%,~1.3)& (93\%,0.1)& (100\%,~0)& (100\%,~0)\\
 \hline
{\bf 100}& (0\%,~53.7)& (0\%,~48.1) & (6\%,~26.9)& (44\%,~14)& (65\%,~14.7)& (87\%,~5)\\
 \hline
  \end{tabular*}
  \caption{Experiments in $B_5$.}
  \label{tb:braid5}
  \end{table}
  \begin{table}[ht]\small
  \centering
  \begin{tabular*}{1\textwidth}%
     {@{\extracolsep{\fill}}|c||c|c|c|c|c|c|}
 \hline
 {\bf L$\backslash$k} &{\bf 10} & {\bf 20} & {\bf 40} & {\bf 80} & {\bf 160} & {\bf 320} \\
 \hline
{\bf  10}& (15\%,~1.8)&  (68\%,~0.3)& (98\%,~0)& (100\%,~0)& (100\%,~0)& (100\%,~0)\\
 \hline
{\bf  50}& (0\%,~4.5)&   (23\%,~1.3)& (82\%,~0)&  (97\%,~0)& (99\%,~0)& (100\%,~0)\\
 \hline
{\bf 100}& (1\%,~41)& (7\%~,23.5)& (33\%,5)& (79\%,~1)& (97\%,~0.6)& (98\%,~1.1)\\
 \hline
  \end{tabular*}
  \caption{Experiments in $B_{10}$.}
  \label{tb:braid10}
  \end{table}
  \begin{table}[ht]\small
  \centering
  \begin{tabular*}{1\textwidth}%
     {@{\extracolsep{\fill}}|c||c|c|c|c|c|c|}
 \hline
 {\bf L$\backslash$k} &{\bf 10} & {\bf 20} & {\bf 40} & {\bf 80} & {\bf 160} & {\bf 320} \\
 \hline
 10& (15\%,~1.6)& (87\%,~0.1)& (100\%,~0)& (100\%,~0)& (100\%,~0)& (100\%,~0)\\
 \hline
 50&  (0\%,~5.4)& (23\%,~1.7) & (81\%,~0.2)& (100\%,~0)& (100\%,~0)& (100\%,~0)\\
 \hline
100&  (0\%,7.8)& (15\%,~2)& (72\%,~0.3)& (97\%,~0)& (100\%,~0)& (100\%,~0)\\
 \hline
  \end{tabular*}
  \caption{Experiments in $B_{20}$.}
  \label{tb:braid20}
  \end{table}

We immediately observe from the data above that:
\begin{itemize}
    \item
the success rate increases as the number of rounds (sample size) increases;
    \item
the success rate decreases as the length of the key increases;
    \item
the success rate increases as the rank of the group increases;
    \item
the average error length decreases as we increase the number of rounds.
\end{itemize}
The first observation is the most interesting since the number of
rounds is one of the main reliability parameters of the protocol,
namely, the soundness error decreases
as $1/2^k$ as the number of rounds $k$ gets larger. But,
at the same time, we observe that security of the
scheme decreases as $k$
increases. The second observation can be interpreted as follows --
the longer the braids are the more difficult it is to compute the
approximation. The third observation is easy to explain. The bigger the
rank of the group the more braid generators commute and the simpler random
braids are.

\subsection{Alternative key generation}
\label{se:special_key_generation}

As we have mentioned in Section \ref{se:zk},
the Sibert et al. scheme, proposed in \cite{SDG}, does not possess perfect zero knowledge
property.
Nevertheless, the authors of \cite{SDG} try to achieve computational zero knowledge
by proposing
a special way of generating public and private information.
They provide some statistical evidence that the scheme can be computationally zero knowledge
if this alternative key generation is used.
In this section we, firstly, outline the proposed key generation method
and, secondly, present actual experiments
supporting our theoretical results even for this special key generation method.

The method of generating of braids in \cite{SDG}
can be translated to the notation of the present paper as follows.
The Prover generates
\begin{itemize}
    \item
nonce elements $r$ as products of $L$ uniformly chosen permutation braids $p_i$ (see \cite{Epstein}) from $B_n$
$$r = p_1\ldots p_{L},$$
in particular, $r$ belongs to the corresponding positive monoid.
    \item
the secret key $s$ as the inverse of a product of $L$ uniformly chosen {\em permutation braids} from $B_n$,
i.e.,
$$s = p_1^{-1}\ldots p_{L}^{-1}.$$
\end{itemize}

We made a very useful observation when doing the experiments with so generated
nonce elements $r$. We observed that the mean-set in this case is often a singleton
set of the form $\{\Delta^k\}$, where $\Delta$ is a half-twist braid and $k\in \MN$. Therefore, to enhance
the performance of Algorithm \ref{al:direct_descend} in step B,
we test not only generators $x\in X^{\pm 1}$, but also $x = \Delta$, and if (in step C)
$\Delta$ minimizes the value of $M_n(gx)$, then we put $x \rightarrow x \Delta$ and return to step B.

In fact it is an interesting question if the uniform distribution on a sphere
in a Garside monoid $G^{+}$ has a singleton mean set $\{\Delta_{G^{+}}^k\}$ for some $k\in \MN$,
where $\Delta_{G^{+}}$ is the Garside element, $\Delta$, in $G^{+}$?
This is clearly true for free abelian monoids.
As we mention above, experiments show that the same can be true in the braid monoid.

Below we present the results of actual experiments done for the group
$B_{10}$. Horizontally we have increasing
number of rounds $k$ from $10$ to $320$ and vertically we have
increasing lengths $L$ (in permutation braids) from $3$ to $10$. Every cell contains a pair $(P\%,E)$
where $P$ is a success rate and $E$ is the average length of the error for the
corresponding pair $(L,k)$ of parameter values.

Since the average Artin length (denoted $L^{\prime}$ in the tables below)
of a permutation braid
on $n$ strands is of order $n^2$, the length
of nonce elements grows very fast with $L$;
it is shown in the leftmost column of the tables
in parentheses. For instance, we can see that for $B_{10}$ the average length
of a product of $L=3$ permutation braids is $81$, the average length
of a product of $L=5$ permutation braids is $138$, etc.
  \begin{table}[ht]\footnotesize
  \centering
  \begin{tabular*}{1\textwidth}%
     {@{\extracolsep{\fill}}|c||c|c|c|c|c|c|}
 \hline
 {\bf L(L')$\backslash$k} &{\bf 10} & {\bf 20} & {\bf 40} & {\bf 80} & {\bf 160} & {\bf 320} \\
 \hline
{\bf  3 (81)} & (0\%,~24.6)  &  (0\%,~22.5)  &  (1\%,~19.6)  & (4\%,~16) & (7\%,~13.1) & (25\%,~12.3)\\
 \hline
{\bf  5 (138)}& (0\%,~46.7)  &  (0\%,~40.9)  &  (0\%,~32.5)  & (2\%,~23.3) & (10\%,~17.6) & (28\%,~14.2) \\
 \hline
{\bf 10 (274)}& (0\%,~110.2) &  (0\%,~102.6) &  (0\%,~103.5) & (0\%,~96.3) & (0\%,~92.7) & (0\%,~87.9) \\
 \hline
  \end{tabular*}
  \caption{Success rate and average length of the error for experiments in $B_{10}$.}
  \label{tb:braid10_pos}
  \end{table}

Again, we observe that success rate increases as we increase the number of rounds, and
the average error length decreases as we increase the number of rounds.

\section{Defending against the attack}
\label{se:defense}

In this section, we describe several principles one can follow
in order to
defend against the mean-set attack presented in this paper or, at least,
to make it computationally infeasible. Defending can be done
through a special choice of the platform group $G$ or a special choice of a distribution $\mu$ on $G$.
Another purpose of this section is to motivate further study of distributions on groups
and computational properties of groups.

\subsection{Groups with no efficiently computable length functions}

One of the main tools in our technique is an efficiently computable function $d_X(\cdot,\cdot)$ on $G$.
To prevent the attacker from computing mean-sets, one can
use a platform group $G$ with a hardly computable length function $d_X(\cdot,\cdot)$
relative to any ``reasonable'' finite generating set $X$. By reasonable generating set
we mean a set, which is small relative to the main security parameter.
Examples of such groups exist.
For instance, length function for any finitely presented group with unsolvable word problem
is not computable. On the other hand, it is hard to work with such groups,
as they do not have efficiently computable normal forms.

A more interesting example is a multiplicative group
of a prime field $\MZ_p^\ast$. The group $\MZ_p^\ast$
is cyclic, i.e., $\MZ_p^\ast = \gp{a}$ for some primitive root $a$
of $p$. It is easy to see that the length of an element
$b\in\MZ_p^\ast$ satisfies
$$
|b| =
\begin{cases}
\log_a b & \mbox{if } \log_a b\le (p-1)/2,\\
p-1-\log_a b & \mbox{otherwirse},\\
\end{cases}
$$
and hence the problem of computing the length of an element
and the discrete logarithm problem  are computationally equivalent.
The discrete logarithm problem is widely believed to be computationally hard
and is used as a basis of security of many cryptographic protocols,
most notably the ElGamal \cite{ElGamal} and Cramer-Shoup \cite{Cramer_Shoup:1998}
cryptosystems.
In other words, $\MZ_p^\ast$ is another example of a group with hardly computable length
function.

\subsection{Systems of probability measures}

Let $G$ be a platform group. Recall that our assumption was that the Prover
uses a fixed distribution on the set of nonce elements, i.e., every element $r_i$
is generated using the same random generator. Instead he can use a sequence
of probability measures $\{\mu_i\}_{i=1}^\infty$, where each measure $\mu_i$,
$i=1,2,\ldots$, is not used
more than once (ever), i.e., every nonce $r_i$, $i=1,2,\ldots$, is generated using a unique
distribution $\{\mu_i\}$. In this case, the attacker does not have theoretical grounds
for working with sampling mean-sets. Nevertheless, it can turn out that the sequence
of random elements $r_1,r_2,\ldots$ can have some other distribution $\mu^\ast$
and the attack will work.
Another difficulty with implementing this idea is that there is no systematic study of distributions
on general finitely generated groups and, in particular, braid groups.
So, it is hard to propose some particular sequence of probability distributions.
Some aspects of defining probability measures on infinite groups are
discussed in \cite{BoMS} and \cite{BMR}.

\subsection{Undefined mean-set}

Another way to foil the attack is to use a distribution $\mu$ on $G$
such that $\ME (\mu)$ is not defined, i.e., the corresponding weight
function is not totally defined.
In that case the assumption of Theorem \ref{th:mean_attack} fails,
and it is easy to see that the sampling weights $M_n(g)$ tend to $\infty$ with probability $1$.
Nevertheless, we still can compare the sampling
weight values, as explained in \cite{Mos:thesis} and \cite{MosUsh:CentralOrder},
where it is shown that the condition of finiteness of $M^{(2)}$
can be relaxed to that of finiteness of $M^{(1)}$. If $M^{(1)}$ is not defined then that
means that the lengths of commitments are too large and are impractical.

\subsection{Large mean-set}
\label{se:large-mean-set}

Also, to foil the attack one can use a distribution $\mu$ on $G$ such that the set $\ME \mu$ is large.
As an example consider an authentication protocol in \cite{Schnorr:1990},
based on the difficulty of computing discrete discrete logarithms in groups of prime order.
The space of nonce elements
in \cite{Schnorr:1990} is an additive group $\MZ_q$ acting by exponentiations on
a bigger group $\MZ_p^\ast$.
It is easy to compute length in $(\MZ_q,+) = \gp{1}$.
But, since the nonce elements $r\in\MZ_q$ are chosen uniformly, it follows that the mean-set is the whole
group $\MZ_q$ (the uniform measure is right-invariant) and in this case
it is impossible to detect the shift $s$ and the mean-set attack fails.
We also refer to \cite{Poupard_Stern:1998} for a modification of
\cite{Schnorr:1990} where nonce elements are not taken modulo $q$
and security proof requires a boundary on the number of times the same key is used.

Now, let $G$ be an infinite group. It is impossible to generate elements of $G$ uniformly,
but one can try to achieve the property described below that can foil the mean-set attack.
Choose a probability measure $\mu$ on $G$ so that the mean-set set $\ME \mu$ is large.
Recall that Algorithm \ref{al:attack} can find up to one element of $G$ minimizing the weight function.
For that it uses Algorithm \ref{al:direct_descend} which randomly (according to some measure $\nu$)
chooses an element of $g\in G$ and then
gradually changes it (descends) to minimize its $M$ value. This way the distribution $\nu$ on the initial choices $g\in G$
defines a distribution $\nu_\mu^\ast$ on the set of local minima of $M$ on $G$. More precisely,
for $g'\in G$,
    $$\nu_\mu^\ast(g') = \mu\{ g\in G \mid \mbox{Algorithm \ref{al:direct_descend} stops with the answer $g'$ on input $g$}\}.$$
Denote by $\mu_s$ the {\em shifted probability measure} on $G$ by an element $s$ defined by
$\mu_s(g) = \mu(s^{-1}g)$. If $S\subseteq G$ is the set of local minima of
the weight function $M$ relative to $\mu$
then the set $sS$ is the set of local minima relative to $\mu_s$. But the distribution
$\nu_{\mu_s}^\ast$ does not have to be induced from $\nu_\mu^\ast$ by the shift $s$, i.e., the equality
$\nu_{\mu_s}^\ast(g) = \nu_{\mu}^\ast(s^{-1}g)$ does not have to hold. In fact, the distributions
$\nu_\mu^\ast$ and $\nu_{\mu_s}^\ast$ can ``favor'' unrelated subsets of $S$ and $sS$ respectively. That would
definitely foil the attack presented in this paper. On the other hand, if $\nu_\mu^\ast$ and $\nu_{\mu_s}^\ast$
are related, then the mean-set attack can still work.

Finally, we want to mention again that probability measures on groups
were not extensively studied and there are no good probability measures known on general groups
and no general methods to construct measures satisfying the desired properties.
Moreover, the problem of making distributions with large mean-sets is very complicated because not every subset of a group $G$
can be realized as a mean-set. See \cite{MosUsh:SLLN1} and \cite{Mos:thesis} for more details.
A number of open questions arise regarding the problems mentioned above, but dealing with
them is beyond the scope of this paper.

\section{Conclusion}
In this paper, we used the probabilistic approach to
analyze the Sibert et al. group-based authentication protocol.
We have proved that the scheme does not meet
necessary security compliances, i.e., it is not computationally zero-knowledge, in practice.
To conduct our analysis, we introduced a new computational problem
for finitely generated groups,
the \textit{shift search problem},
and employed probabilistic tools discussed in \cite{MosUsh:SLLN1} to deal with the problem.
In particular, the concept of the mean-set and
the generalized strong law of large numbers for
random group elements with values in the vertices of
the connected
and locally-finite Cayley graph of
a given infinite finitely-generated group are used.
The rate of success of getting the secret key, as a solution to the
\textit{shift search problem}, has been proved to be linear or exponential depending
on the assumptions one is willing to make.
In addition, we have provided experimental evidence that our approach is practical
and can succeed even for braid groups.
This work shows, among other things, that generalization
of classical probabilistic results to combinatorial objects
can lead to useful applications in
group-based cryptography.

\providecommand{\bysame}{\leavevmode\hbox to3em{\hrulefill}\thinspace}
\providecommand{\MR}{\relax\ifhmode\unskip\space\fi MR }
\providecommand{\MRhref}[2]{%
  \href{http://www.ams.org/mathscinet-getitem?mr=#1}{#2}
}
\providecommand{\href}[2]{#2}

\end{document}